\newtheorem{theorem}{Theorem}[section]
\newtheorem{lemma}[theorem]{Lemma}
\newtheorem{proposition}[theorem]{Proposition}
\newtheorem{corollary}[theorem]{Corollary}
\newtheorem{definition}[theorem]{Definition}
\newtheorem{example}[theorem]{Example}
\newtheorem{remark}{Remark}
\title{{El Teorema de Gelfand Naimark desde una perspectiva Categórica\\ The Gelfand--Naimark Theorem from a Categorical Perspective}}
\author[1]{Sebastian Alvarez Avendaño \orcidlink{0000-0002-8022-4965}}
\author[2]{Breitner Ocampo \orcidlink{0000-0003-3507-4598}}
\author[3]{Pedro Rizzo \orcidlink{0000-0001-8793-0822} }
\affil[1]{AI/ML Associate, JPMorgan Chase \& Co \texttt{sebastian.alvarezavendano@chase.com}}
\affil[2]{Universidad de Antioquia\\Medellin \texttt{breitner.ocampo@@udea.edu.co}}
\affil[3]{Universidad de Antioquia\\Medellin \texttt{pedro.hernandez@udea.edu.co}}
\begin{document}

% Genera el título
\maketitle

% Resumen del artículo
\begin{abstract}
Este artículo presenta como resultado principal la equivalencia entre, las categorías de espacios topológicos Hausdorff-Compactos y la categoría de las  $C^*-$álgebras conmutativas con unidad, producto de la ``traducción'' en  este lenguaje del teorema de Gelfand--Naimark presentado en 1943 (\cite{GN}). Haremos un recorrido sobre las principales ideas del análisis y el álgebra, conjugadas con  éxito, en el estudio de la teoría de Álgebras de Banach. Así mismo estableceremos, a forma de conclusión, diversas aplicaciones que resultan naturalmente posibles a la  luz de la ``analogía y generalización'' que nos permiten la teoría de categorías.

\noindent\textbf{\small{Palabras claves: $C^*$-algebras, Categorías, Espacios Topológicos, Teorema de Gelfand-Naimark, Teoría de Representaciones.}}
\end{abstract}

\begin{abstract}
The goal of this paper is to prove the categorical equivalence between the category of Hausdorff-Compact topological spaces and the category of Unital Commutative $C^*$-algebras. This equivalence can be interpreted as a way of rewriting the well known Gelfand-Naimark Theorem \cite{GN} in a categorical language. We will present the basic concepts in the theory of Banach Algebras as a successful link between Analysis and Algebra. Likewise, we will show some applications due to this new perspective, highlighting the categorical connection through proofs of typical problems that don't have an easy solution in $C^*-$algebra.

\noindent\textbf{\small{Keywords: Category Theory, $C^*$-algebras, Gelfand-Naimark Theorem, Topological Spaces, Representation Theory.}}
\end{abstract}

\begin{section}{Introducción}

In 1943, Israel Gelfand and Mark Naimark introduced the theory of $C^*$-algebras in response to a problem proposed by von Neumann in the emerging field of quantum mechanics \cite{GN}. Their seminal article not only presented what is now known as the Gelfand-Naimark Theorem but also laid the foundation for the rapid development of Banach algebra theory and its applications in both mathematics and physics. For a comprehensive exploration of the far-reaching impact of \cite{GN} across various fields---including quantum field theory, statistical mechanics, knot theory, dynamical systems, group representation theory, and KK-theory---we recommend \cite{D2}.

The Gelfand-Naimark Theorem can be broadly interpreted as the study of Hausdorff-compact topological spaces through the rich algebraic structure of their continuous function spaces. Conversely, it asserts that commutative $C^{\ast}$-algebras with unity are determined by specific Hausdorff-compact topological spaces. Such correspondences between mathematical structures, often viewed as instances of duality, arise naturally within the framework of category theory. More precisely, if we denote by $\mathcal{HC}$ the category of Hausdorff-compact topological spaces and by $\mathcal{CAU}$ the category of commutative $C^{\ast}$-algebras with unity, the categorical interpretation of the Gelfand-Naimark Theorem establishes an equivalence between these two categories.

One of the main objectives of this article is to define the context and demonstrate the equivalence mentioned above. To achieve this, we present all the necessary introductory content on commutative $C^{\ast}$-algebras with unity in an accessible and engaging manner. Additionally, we introduce the fundamental concepts of basic category theory and provide concrete applications that illustrate the implications of these general correspondences. 

A secondary objective of this article is to highlight the importance, simplicity, and relevance of results like that of Gelfand and Naimark by structuring the content in a way that allows readers to draw connections through analogies and generalizations provided by category theory. In line with this goal, we offer insights to help readers understand the fundamentals of more advanced topics, such as non-commutative geometry, which historically emerged from the correspondence between commutative $C^{\ast}$-algebras with unity and Hausdorff-compact topological spaces.

To facilitate the reading of this article, we recommend having some familiarity with basic concepts from commutative algebra, functional analysis, topology, and complex analysis. Initially, we present the fundamental definitions to familiarize the reader with the terminology used throughout the text. In Sections 3 and 4, we explore the theory of $C^*$-algebras, generalizing the concept of the spectrum and introducing the Gelfand Transform, which serves as the main tool of this work. In Section 5, we state the Gelfand-Naimark Theorem using the language of Category Theory, following a brief overview of its definitions and key concepts. Finally, in Section 6, we demonstrate applications of the main result, drawing inspiration from another duality theorem, the Hilbert Nullstellensatz. Additionally, we provide key characterizations of elements in a commutative $C^*$-algebra with unity.

\end{section}
%%%%%%%%%%%%%%%%%%%%%%%%%%%%%%%%%%%%%%%%%%%%%%%%%%

\begin{section}{\texorpdfstring{$C^*$-algebras}{C*-algebras}}\label{Calg}

In this section, we present the basic definitions necessary for the development of this work, including the formal definition of a $C^*$-algebra and the notion of the spectrum of an element. We also provide some classical examples that will be useful in later sections. It is worth noting that many of the preliminary results apply to both Banach algebras and $C^*$-algebras. However, since our focus is on describing $C^*$-algebras, we will limit our discussion to this context. Throughout this article, all algebraic structures and analytic concepts will be considered over the field $\mathbb{C}$ of complex numbers.

Recall that an algebra $\mathcal{A}$ is a $\mathbb{C}$-vector space equipped with a product ``$\cdot$'' that is compatible with the vector space structure. If this product satisfies $a \cdot b = b \cdot a$, the algebra is said to be {\it commutative}. If the algebra $\mathcal{A}$ contains an element $e$ such that $e \cdot a = a \cdot e = a$ for all $a \in \mathcal{A}$, we say that the algebra has a {\it unit}. For algebras without a unit, it is possible to construct a {\it unitized algebra} $\widetilde{\mathcal{A}}$ in which a ``copy'' of $\mathcal{A}$ exists. This construction is known as the {\it unitalization process}, which we will not discuss here, but we recommend \cite{CY} and \cite{D} for further details.

We say that the algebra $\mathcal{A}$ is {\it normed} if there exists a norm $\|\cdot\|:\mathcal{A} \to \mathbb{R}^+ \cup \{0\}$ that additionally satisfies $\| a \cdot b \| \leq \| a \| \, \| b \|$ for all $a,b \in \mathcal{A}$. As a normed vector space, $\mathcal{A}$ becomes a Hausdorff topological space, allowing us to discuss open sets, closed sets, sequences, convergence, and the uniqueness of limits. If the topological space $(\mathcal{A}, \|\cdot\|)$ is complete (meaning that every Cauchy sequence converges), we say that $\mathcal{A}$ is a {\it Banach algebra}.

The simplest example of a set that satisfies these conditions is the set of complex numbers. The object of our interest, $C^*$-algebras, can be understood as a natural generalization of the complex numbers. To clarify this idea, we will now define {\it algebras with involution}.

\begin{definition}[$\ast$-algebra]  \label{def*algebra}
Let $\mathcal{A}$ be an algebra and $*: \mathcal{A} \rightarrow \mathcal{A}$ be a function. If for every $a, b \in \mathcal{A}$ and $\alpha, \beta \in \mathbb{C}$, $\ast$ satisfies:
\begin{enumerate}
\item $(a^*)^*= a$,
\item $(\alpha a + \beta b)^* = \overline{\alpha}a^* + \overline{\beta}b^*$,
\item $(ab)^* = b^*a^*$,
\end{enumerate}
then $*$ is called an \emph{involution}, and the pair $(\mathcal{A}, *)$ is called a $\ast$-\emph{algebra} or \emph{algebra with involution}.
\end{definition}

Next, we will provide the definition of the object of study of this article, the $C^*$-algebras. These were systematically introduced in 1943 in the article ``On the embedding of normed rings into the ring of operators in Hilbert space'' by Israel Gelfand and Mark Naimark \cite{GN}, and they continue to be relevant today due to their connections with Harmonic Analysis, Operator Theory, Algebraic Topology, Quantum Physics, among others. To delve deeper into these relationships, we recommend, for example, \cite{D1}, \cite{GPS}, \cite{KR}, \cite{R1}, \cite{R2}, and the references cited therein.

\begin{definition}[\textbf{C*-algebra}] \label{defC*algebra}
A normed algebra $(\mathcal{A},\|\cdot\|)$ is called a $C^*$-\textit{algebra} if:
\begin{enumerate}
\item $(\mathcal{A},\|\cdot\|)$ is a Banach algebra.
\item $\mathcal{A}$ is an algebra with involution $*$.
\item \textbf{($C^{\ast}-$Condition)} For every $a \in \mathcal{A}$, we have $\|a^*a\| = \| a \|^2$.
\end{enumerate}
\end{definition}

In every $C^*$-algebra, the involution is a continuous isometry. Indeed, since $\|a^*\|^2 = \|aa^*\| \leq \|a\|\|a^*\|$, it follows that $\|a^*\| \leq \|a\|$. Similarly, we have $\|a\| \leq \|a^*\|$, thus showing that $\|a\| = \|a^*\|$.

From this point on, unless otherwise stated explicitly, the letter $\mathcal{A}$ will always denote a commutative $C^*$-algebra with unity. Note that $e^*= e$ and $\|e\| = 1$, since the identity element is unique and $\|e\|^2 = \|e^*e\| = \|e\|$.

\begin{example}
Complex numbers form a $C^*$-algebra with the involution given by conjugation. Indeed, we can observe that in this case, for $\lambda = a+bi \in \mathbb{C}$, we have $\lambda \bar{\lambda} = (a+bi)(a-bi) = a^2+b^2 = |\lambda|^2$. Therefore, complex numbers satisfy the $C^{\ast}-$condition of Definition \ref{defC*algebra}.
\end{example}

\begin{example}\label{defC(X)}
Let $X$ be a Hausdorff-Compact space. The $\mathbb{C}$-vector space
$$
C(X)=\left\{f:X\rightarrow \mathbb{C}|\, f\,\text{is continuous}\right\}
$$ 
 equipped with the norm $\displaystyle\|f\|_{\infty}=\sup_{x\in X}|f(x)|$ and the involution
$f^{*}(x)=\overline{f(x)}$, is a non-trivial example of a commutative $C^*$-algebra with unity, where the unity element is the constant function $f(x)\equiv 1$.
\end{example}

The following two examples of $C^{\ast}$-algebras are presented for the sake of completeness and due to their importance in this field. However, we emphasize that they will not be of interest to us, as the first example generally lacks a unit, and the second is not commutative.

\begin{example}
Let $X$ be a Hausdorff locally compact space. The set $C_{0}(X)$ is defined as the set of functions $f:X \to \mathbb{C}$ such that:
\begin{itemize}
\item $f$ is a continuous function,
\item For every $\epsilon>0$, there exists a compact set $K\subset X$ such that $|f(x)|< \epsilon$
for all $x\in X\setminus K$.
\end{itemize}
With the norm $\displaystyle \|f\|_{\infty}=\sup_{x\in X}|f(x)|$ and involution $f^{*}(x)=\overline{f(x)}$, the normed space $C_{0}(X)$ is a $C^{\ast}$-algebra.
\end{example}

\begin{example}
Let $\mathcal{H}$ be a Hilbert space, with $\langle \cdot, \cdot \rangle$ as its inner product. Consider $\mathfrak{B}(\mathcal{H})$ as the $\mathbb{C}$-vector space of continuous linear operators $T: \mathcal{H} \to \mathcal{H}$. The product of two elements $T, S \in \mathfrak{B}(\mathcal{H})$ is the composition $T \circ S$. The norm of an operator $T$ is defined by
\begin{equation*}
\| T \|_{\mathfrak{B}(\mathcal{H})} = \sup\left\{ \| T(\zeta) \|_{\mathcal{H}}: \zeta \in \mathcal{H}, 
\| \zeta \|_{\mathcal{H}} \leq 1\right\}.
\end{equation*}
The involution for the element $T$ in the set $\mathfrak{B}(\mathcal{H})$ is the unique operator $T^: \mathcal{H} \to \mathcal{H}$ such that
\begin{equation*}
\langle T(\zeta), \eta \rangle = \langle \zeta, T^*(\eta) \rangle.
\end{equation*}
\end{example}

It is well known that the space $\mathfrak{B}(\mathcal{H})$ is complete (\cite[Theorem 2.10-2]{K}). To conclude that $\mathfrak{B}(\mathcal{H})$ is a $C^{\ast}-$algebra, we only need to prove the $C^{\ast}-$condition from Definition \ref{defC*algebra}. For this, we will calculate the norm of $TT^*(\zeta)$ as follows:
\begin{eqnarray*}
\| T^*T(\zeta) \|^2 &=& \langle T^*T(\zeta), T^*T(\zeta) \rangle \\
&=& \langle T(\zeta), T(T^*T(\zeta)) \rangle \\
&\leq& \| T(\zeta) \| \| T (T^*T(\zeta)) \| \\
&\leq& \| T\|\|\zeta\| \| T \| \| T^*T(\zeta) \| \\
\end{eqnarray*}
where
\begin{equation*}
\frac{\| T^*T(\zeta) \|}{\| \zeta \|} \leq \| T \|^2
\end{equation*}
and thus, we conclude that
\begin{equation*}
\| T^*T \| = \sup_{\zeta\in \mathcal{H}}\frac{\| T^*T(\zeta) \|}{\| \zeta \|} \leq \| T \|^2.
\end{equation*}
The other inequality is proved in a similar way. In particular, $M_n(\mathbb{C})$ is a $C^{\ast}$-algebra (non-commutative if $n\geq2$) if we consider the Euclidean norm $\|\cdot\|_{2}$ on $\mathbb{C}^n$ and each matrix as an operator on $\mathfrak{B}(\mathbb{C}^n)$. In this case, the involution is given by $(a_{i,j})^* = (\overline{a_{j,i}})$ and the norm is defined by $\|A\|=\sup\limits_{\|x\|=1}\|Ax\|_{2}.$

An element $a\in\mathcal{A}$, non-zero, is called \textit{invertible} if there exists $b\in \mathcal{A}$ such that $ab=ba=e$. Invertible elements play an important role in the development of the theory of $C^*$-algebras, as will be shown with the sequence of results below.

\begin{proposition}\label{lemma_inverses_form}(Geometric Series)
Let $a \in \mathcal{A}$ such that $\|a\| < 1$. Then $e-a$ is invertible, and the series $\sum_{n=0}^{\infty}a^n$, where $a^0=e$, converges absolutely to $(e-a)^{-1}$.
\end{proposition}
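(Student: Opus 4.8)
The plan is to prove this by the standard Neumann (geometric) series argument, which rests on the completeness of $\mathcal{A}$ as a Banach algebra. First I would establish that the series $\sum_{n=0}^{\infty} a^n$ converges absolutely. Since the norm on $\mathcal{A}$ is submultiplicative, we have $\|a^n\| \leq \|a\|^n$ for every $n \geq 0$, with the convention $a^0 = e$ and $\|e\| = 1$. Because $\|a\| < 1$, the geometric series of real numbers $\sum_{n=0}^{\infty} \|a\|^n$ converges (to $\tfrac{1}{1-\|a\|}$), so $\sum_{n=0}^{\infty} \|a^n\|$ converges by comparison. This is precisely absolute convergence of the series in $\mathcal{A}$.

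The key step is to upgrade absolute convergence to genuine convergence of the partial sums to an element of $\mathcal{A}$. Here I would invoke completeness: in a Banach space, every absolutely convergent series converges. Concretely, writing $s_N = \sum_{n=0}^{N} a^n$, the tail estimate $\|s_M - s_N\| \leq \sum_{n=N+1}^{M} \|a\|^n$ shows that $(s_N)$ is Cauchy, and completeness of $\mathcal{A}$ yields a limit $s \in \mathcal{A}$.

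It then remains to verify that this limit $s$ is a two-sided inverse of $e - a$. I would compute the telescoping product directly: $(e-a) s_N = (e-a)\sum_{n=0}^{N} a^n = e - a^{N+1}$, and symmetrically $s_N (e-a) = e - a^{N+1}$. Since $\|a^{N+1}\| \leq \|a\|^{N+1} \to 0$, we have $a^{N+1} \to 0$ as $N \to \infty$. Using continuity of multiplication (each of the maps $x \mapsto (e-a)x$ and $x \mapsto x(e-a)$ is continuous, being bounded by submultiplicativity), I pass to the limit to obtain $(e-a)s = e$ and $s(e-a) = e$. This shows $e-a$ is invertible with inverse $s = \sum_{n=0}^{\infty} a^n$, as claimed.

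I do not anticipate a serious obstacle here, since the argument is self-contained once completeness is used; the only point requiring a little care is justifying the interchange of limit and product, which follows cleanly from continuity of multiplication in a normed algebra. The essential hypothesis is $\|a\| < 1$, which is exactly what makes the comparison with a convergent real geometric series work.
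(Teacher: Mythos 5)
Your proof is correct and follows essentially the same route as the paper's: absolute convergence of $\sum a^n$ by comparison with the real geometric series, completeness of $\mathcal{A}$ to obtain a limit, and the telescoping identity $(e-a)s_N = e - a^{N+1}$ with $a^{N+1}\to 0$ to identify the limit as the inverse. The only cosmetic difference is that the paper cites the fact ``absolutely convergent $\Rightarrow$ convergent in a Banach space'' as a known result, whereas you prove it inline via the Cauchy tail estimate, and you make explicit the continuity-of-multiplication step that the paper leaves implicit.
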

\begin{proof}
For this proof, we will use a well-known fact in Functional Analysis (\cite[$\S 2.3$ \text{Problems}, 7-9, p. 71]{K}): ``A normed vector space is a Banach space if and only if every absolutely convergent series is convergent."

\noindent Now, since $\|a\| < 1$, then $\lim \limits_{N \to \infty}\| a^{N}\| \leq \lim \limits_{N \to \infty} \| a\|^{N}=0$. Thus, we have, first, that $\lim\limits_{N \to \infty} a^N=0$, and second, that the numerical series $\sum_{n=0}^{\infty} \| a \|^n$ converges to $(1- \| a \|)^{-1}$. By applying the aforementioned result from Functional Analysis to the $C^*$-algebra $\mathcal{A}$, we conclude that the series $b=\sum_{n=0}^{\infty} a^n$ converges absolutely. From
\begin{equation*}
(e-a)b = (e-a)\left( \lim_{N \to \infty} \sum_{n=0}^{N} a^n \right)
=\lim_{N \to \infty} e-a^{N+1}=e,
\end{equation*}
\noindent and similarly proving the equality $b(e-a) = e$, we conclude that $e-a$ is invertible. By the uniqueness of inverses, we deduce $(e-a)^{-1} =\sum_{n=0}^{\infty}a^n$, which is what we wanted to prove.
\end{proof}
%%%%%%%%%%%%%%%%%%%%%%%%%%%%%%%%%%%%%%%%%%%%%%%%%%%%%%%%%%%%%%%%%%%%%%%%%%%%%%%%
We denote the set of invertible elements in $\mathcal{A}$ by $G(\mathcal{A})$. An important consequence of the following result is that $G(\mathcal{A})$ is a (multiplicative subgroup and) open subset of the $C^*$-algebra $\mathcal{A}$.

\begin{corollary} \label{espectroacotado}
Let $a \in \mathcal{A}$ be an invertible element, $b \in \mathcal{A}$, and $\lambda \in \mathbb{C}$.
\begin{enumerate}
\item If $\|a - b\| < \frac{1}{\|a^{-1}\|}$, then $b$ is invertible and the series
\begin{equation} \label{eq_inverses_form_gen}
\sum_{n=0}^{\infty}(a^{-1}(a-b))^na^{-1}
\end{equation}
converges absolutely to $b^{-1}$. In particular, $G(\mathcal{A})$ is open.

\item If $\|a\| < |\lambda|$, then $(\lambda e - a)$ is invertible and its inverse is given by
\begin{equation} \label{eq:conv_abs_spect_prim}
\dfrac{1}{\lambda} \sum_{n=0}^{\infty} \dfrac{a^n}{\lambda^{n}}
\end{equation}
\end{enumerate}
\end{corollary}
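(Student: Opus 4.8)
The plan is to reduce both statements to the geometric-series result of Proposition~\ref{lemma_inverses_form} by a factorization that exposes an element of norm strictly less than one. For part~(1), I would first write $b = a - (a-b) = a\bigl(e - a^{-1}(a-b)\bigr)$ and set $c := a^{-1}(a-b)$. Submultiplicativity of the norm then gives $\|c\| \le \|a^{-1}\|\,\|a-b\| < \|a^{-1}\| \cdot \tfrac{1}{\|a^{-1}\|} = 1$, so Proposition~\ref{lemma_inverses_form} applies to $e - c$ and yields $(e-c)^{-1} = \sum_{n=0}^{\infty} c^n$ with absolute convergence. Since $b = a(e-c)$ is a product of two invertible elements, it is invertible with $b^{-1} = (e-c)^{-1}a^{-1}$.

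Next I would turn this closed form into the series~(\ref{eq_inverses_form_gen}). Right multiplication by the fixed element $a^{-1}$ is continuous in a normed algebra, so it commutes with the limit defining $(e-c)^{-1}$, giving $b^{-1} = \bigl(\sum_{n=0}^{\infty} c^n\bigr)a^{-1} = \sum_{n=0}^{\infty} c^n a^{-1} = \sum_{n=0}^{\infty}\bigl(a^{-1}(a-b)\bigr)^n a^{-1}$. Absolute convergence of this series follows by comparison, since $\|c^n a^{-1}\| \le \|c\|^n \|a^{-1}\|$ and $\sum_{n=0}^{\infty}\|c\|^n$ converges because $\|c\| < 1$. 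The openness of $G(\mathcal{A})$ is then immediate: every $a \in G(\mathcal{A})$ is the centre of an open ball of radius $1/\|a^{-1}\|$ lying entirely inside $G(\mathcal{A})$.

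For part~(2) the cleanest route is the direct factorization $\lambda e - a = \lambda\bigl(e - \tfrac{a}{\lambda}\bigr)$, valid since $\|a\| < |\lambda|$ forces $\lambda \neq 0$. Writing $d := a/\lambda$, the hypothesis gives $\|d\| = \|a\|/|\lambda| < 1$, so Proposition~\ref{lemma_inverses_form} provides $(e-d)^{-1} = \sum_{n=0}^{\infty} d^n = \sum_{n=0}^{\infty} a^n/\lambda^n$, and dividing by $\lambda$ recovers exactly~(\ref{eq:conv_abs_spect_prim}). Alternatively, one can deduce~(2) from~(1) by taking the invertible element to be $\lambda e$ (so that $(\lambda e)^{-1} = \tfrac{1}{\lambda}e$ and $\|(\lambda e)^{-1}\| = 1/|\lambda|$) and the perturbed element to be $\lambda e - a$, since then the gap $\|\lambda e - (\lambda e - a)\| = \|a\|$ is precisely below $1/\|(\lambda e)^{-1}\| = |\lambda|$.

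The computations here are essentially bookkeeping; the only point deserving care is the passage of the infinite sum through right multiplication by $a^{-1}$ in part~(1), which relies on continuity of the algebra product rather than on any term-by-term rearrangement. Once the norm estimate $\|c\| < 1$ is secured, everything else follows mechanically from Proposition~\ref{lemma_inverses_form}.
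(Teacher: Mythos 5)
Your proof is correct and follows essentially the same route as the paper: both factor $b = a\bigl(e - a^{-1}(a-b)\bigr)$ (the paper writes the small element as $e - a^{-1}b$, which is the same thing as your $c$), invoke Proposition~\ref{lemma_inverses_form}, and right-multiply the geometric series by $a^{-1}$, and both handle part~(2) via $\lambda e - a = \lambda\bigl(e - \tfrac{a}{\lambda}\bigr)$. Your added justification for passing $a^{-1}$ through the limit, and the alternative derivation of~(2) from~(1), are nice touches the paper leaves implicit, but they do not change the argument.
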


\begin{proof}
For the first statement, note that the hypothesis implies $\|e - a^{-1}b\| < 1$, and by Proposition \ref{lemma_inverses_form}, it follows that $c = a^{-1}b = (e - (e - a^{-1}b))$ is invertible. Thus, $b = ac$ is the product of two invertible elements, and therefore, $b^{-1}$ exists. We have $b^{-1}a = \left(a^{-1}b\right)^{-1} = \sum_{n=0}^{\infty}\left(e - a^{-1}b\right)^n = \sum_{n=0}^{\infty}\left(a^{-1}(a - b)\right)^n$. Hence, $b^{-1} = \sum_{n=0}^{\infty}(a^{-1}(a - b))^na^{-1}$. Therefore, for every $a \in G(\mathcal{A})$, we have $B(a;\frac{1}{\|a^{-1}\|}) \subset G(\mathcal{A})$, which proves that $G(\mathcal{A})$ is an open set.

For the second statement, observe that the hypothesis is equivalent to $\|\frac{a}{\lambda}\| < 1$, and by a similar reasoning as before, applying Proposition \ref{lemma_inverses_form}, the conclusion follows.
    
\end{proof}

Another important consequence of the invertibility of $(e-a)$ is the following:

\begin{corollary}\label{funct_inv_cont}
The group homomorphism
\begin{eqnarray*}
\mbox{inv}: G(\mathcal{A}) &\to& G(\mathcal{A})\\
a &\mapsto& \mbox{inv}(a) = a^{-1}
\end{eqnarray*}
is a continuous function.
\end{corollary}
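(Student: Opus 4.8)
The plan is to prove continuity pointwise: I would fix an arbitrary $a \in G(\mathcal{A})$ and show that $\mbox{inv}$ is continuous at $a$. The essential input is the explicit series expansion furnished by Corollary \ref{espectroacotado}(1): whenever $\|a - b\| < \frac{1}{\|a^{-1}\|}$, the element $b$ is invertible and
$$
b^{-1} = \sum_{n=0}^{\infty}\left(a^{-1}(a-b)\right)^n a^{-1}.
$$
I would therefore restrict attention to $b$ lying in the open ball $B\!\left(a; \tfrac{1}{\|a^{-1}\|}\right)$, on which this formula is valid and guarantees both the existence of $b^{-1}$ and the convergence of the series.

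Next, I would isolate the $n=0$ term of the series, which is precisely $a^{-1}$, and subtract it to obtain
$$
b^{-1} - a^{-1} = \sum_{n=1}^{\infty}\left(a^{-1}(a-b)\right)^n a^{-1}.
$$
Applying the triangle inequality together with submultiplicativity of the norm yields
$$
\|b^{-1} - a^{-1}\| \leq \|a^{-1}\| \sum_{n=1}^{\infty} \left(\|a^{-1}\|\,\|a - b\|\right)^n.
$$
Writing $r = \|a^{-1}\|\,\|a - b\|$, the hypothesis ensures $r < 1$, so the geometric series sums to $\frac{r}{1-r}$, giving the quantitative bound
$$
\|b^{-1} - a^{-1}\| \leq \frac{\|a^{-1}\|^2\,\|a - b\|}{1 - \|a^{-1}\|\,\|a - b\|}.
$$

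Finally, I would let $b \to a$: since $\|a - b\| \to 0$, the denominator tends to $1$ while the numerator tends to $0$, so the right-hand side vanishes. Concretely, given $\varepsilon > 0$, I would choose $\delta > 0$ small enough that $\delta < \frac{1}{2\|a^{-1}\|}$ (keeping us inside the ball with denominator bounded below by $\tfrac{1}{2}$) and also forcing the displayed bound below $\varepsilon$; then $\|a - b\| < \delta$ implies $\|b^{-1} - a^{-1}\| < \varepsilon$, which establishes continuity at $a$ and hence on all of $G(\mathcal{A})$. I do not anticipate a genuine obstacle: the only points demanding care are to remain inside the ball where Corollary \ref{espectroacotado} applies, so that $b^{-1}$ exists and the series converges, and to carry out the geometric estimate cleanly so that the dependence on $\|a - b\|$ is manifestly of the right order.
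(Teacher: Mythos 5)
Your proposal is correct and follows essentially the same route as the paper's proof: both invoke the series expansion of $b^{-1}$ from Corollary \ref{espectroacotado}(1), strip off the $n=0$ term $a^{-1}$, bound the tail by the geometric series $\sum_{n\geq 1}\left(\|a^{-1}\|\,\|a-b\|\right)^n$, and conclude with an explicit $\varepsilon$--$\delta$ choice. The only difference is cosmetic (your $\delta$ is chosen via the bound $\delta < \tfrac{1}{2\|a^{-1}\|}$ rather than the paper's closed-form $\delta$), so there is nothing to add.
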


\begin{proof}

Let $\epsilon > 0$ be given. We define $\epsilon_1:=\epsilon\|a^{-1}\|$ as an auxiliary quantity, and we choose $\delta= \dfrac{\epsilon_1}{(1+\epsilon_1)|a^{-1}|}$. Since $\dfrac{\epsilon_1}{1+\epsilon_1}<1$, it follows that $\delta< \dfrac{1}{|a^{-1}|}$. Now, if $b$ satisfies $|a-b|<\delta$, then $|a^{-1}(b-a)|<1$. We can calculate the difference $|b^{-1}-a^{-1}|$ using the first statement of Corollary \ref{espectroacotado}, which gives us:

\begin{eqnarray*} 
\|b^{-1}-a^{-1}\| &=& \|\sum_{n=0}^{\infty}(a^{-1}(a-b))^na^{-1}-a^{-1}\|\\
&=& \|\sum_{n=1}^{\infty}(a^{-1}(a-b))^na^{-1}\| \\
&\leq& \|a^{-1}\|\sum_{n=1}^{\infty}\|(a^{-1}(a-b))^n\|\\ 
&=& \|a^{-1}\|\dfrac{\|(a^{-1}(a-b))\|}{1-\|(a^{-1}(a-b))\|} \\
&\leq& \|a^{-1}\|\epsilon_1=\epsilon.
\end{eqnarray*}

Thus, the inversion homomorphism is a continuous function.
\end{proof}

\begin{definition}\label{def_Resolvente_Espectro}
For each element $a \in \mathcal{A}$, we define the \emph{Resolvent} of $a$ as the set
\begin{equation} \label{def_Resolvente_fun}
\rho(a) = \{\lambda \in \mathbb{C}: \lambda e - a  \text{ is invertible in $ \mathcal{A} $}\}.
\end{equation}
We define the \emph{Spectrum} of $a$ as the complement in $\mathbb{C}$ of the set $\rho(a)$:
\begin{equation} \label{def_Espectro_fun}
\sigma(a) = \mathbb{C} \setminus \rho(a).
\end{equation}
\end{definition}

\begin{remark}
The set of \emph{eigenvalues of a matrix} $M \in M_n(\mathbb{C})$ is defined as
\begin{equation*}
\{\lambda \in \mathbb{C} : (M - \lambda I_n)v = 0 \text{ for some nonzero vector } v \in \mathbb{C}^n \}.
\end{equation*}
Thus, the spectrum of a matrix $M$, as defined in Definition \ref{def_Resolvente_Espectro}, coincides with the set of its eigenvalues. This is why the definition of spectrum can be seen as a natural generalization of the concept of eigenvalues for matrices.
\end{remark}

\begin{example}
Let $X$ be a Hausdorff-compact space, and let $C(X)$ be the set of continuous functions with values in the complex numbers. The resolvent set of $f \in C(X)$ can be obtained as follows: Given $f \in C(X)$ and $\lambda \in \mathbb{C}$, it is easy to see that $f - \lambda$ is invertible if and only if $f(x) - \lambda \neq 0$ for every $x \in X$. This is equivalent to $f(x) \neq \lambda$ for each $x \in X$, or equivalently, $\lambda \notin \text{Im}(f)$. Since the spectrum is the complement in $\mathbb{C}$ of the resolvent set, we conclude that $\sigma(f) = \text{Im}(f)$.
\end{example}

The spectrum of an element plays a fundamental role in the theory of $C^{\ast}$-algebras. One of its most important properties is the relationship between the spectrum of an element and a continuous function defined on that set. This relationship is known as the \emph{Image of Spectrum Theorem} \cite[Corollary 2.37, p.~41]{DR}. This connection is based on the following proposition.

\begin{proposition}\label{TeoremaEspectroPoli}
For every $a \in \mathcal{A}$ and every polynomial with complex coefficients $p(z) \in \mathbb{C}[z]$, we have the equality of sets $\sigma(p(a)) = p(\sigma(a))$.
\end{proposition}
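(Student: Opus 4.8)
The plan is to fix an arbitrary $\lambda \in \mathbb{C}$ and to decide membership of $\lambda$ in each of the two sets by factoring the polynomial $p(z) - \lambda$ completely over $\mathbb{C}$. Assuming first that $p$ is non-constant, of degree $n \geq 1$ with leading coefficient $c \neq 0$, the Fundamental Theorem of Algebra lets me write $p(z) - \lambda = c \prod_{i=1}^{n}(z - \mu_i)$, where the $\mu_i$ are exactly the roots of $p(z) - \lambda$, that is, the solutions of $p(\mu_i) = \lambda$. Evaluating this polynomial identity at the element $a$ (which is legitimate since polynomial evaluation is an algebra homomorphism from $\mathbb{C}[z]$ into $\mathcal{A}$ sending $z \mapsto a$ and $1 \mapsto e$) yields $p(a) - \lambda e = c \prod_{i=1}^{n}(a - \mu_i e)$.

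The crucial step is then to show that $p(a) - \lambda e$ is invertible if and only if every factor $a - \mu_i e$ is invertible. One direction is immediate, since a product of invertible elements is invertible. For the converse I would use that the factors $a - \mu_i e$ are all polynomials in $a$ and hence commute with one another; in the commutative algebra $\mathcal{A}$ a product of elements is a unit precisely when each factor is a unit, because if $xy$ is invertible then $y\,(xy)^{-1}$ is a two-sided inverse of $x$. The nonzero scalar $c$ is irrelevant to invertibility. Iterating this observation across the finite product gives the claimed equivalence.

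With this equivalence in hand the identity follows by negation. Indeed, $\lambda \in \sigma(p(a))$ means that $p(a) - \lambda e$ is not invertible, which happens exactly when some factor $a - \mu_i e$ fails to be invertible, i.e. when some root $\mu_i$ lies in $\sigma(a)$. Since $p(\mu_i) = \lambda$, this is precisely the statement that $\lambda \in p(\sigma(a))$: if $\mu_i \in \sigma(a)$ then $\lambda = p(\mu_i) \in p(\sigma(a))$, and conversely any $\lambda = p(\mu)$ with $\mu \in \sigma(a)$ forces $\mu$ to be one of the roots $\mu_i$, so the corresponding factor is non-invertible. This yields both inclusions simultaneously and hence $\sigma(p(a)) = p(\sigma(a))$.

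The main obstacle I anticipate is the factor-by-factor invertibility lemma together with the degenerate case of a constant polynomial, where the factorization argument is vacuous. If $p(z) \equiv c$, then $p(a) = c e$, so $\sigma(p(a)) = \{c\}$, whereas $p(\sigma(a)) = \{c\}$ holds only provided $\sigma(a)$ is nonempty. Thus the constant case relies on the fact, standard for unital complex Banach algebras, that the spectrum of every element is nonempty, which I would invoke separately to close this edge case.
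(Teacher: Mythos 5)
Your proof is correct and follows essentially the same route as the paper's: factor $p(z)-\lambda$ via the Fundamental Theorem of Algebra, evaluate at $a$, and reduce invertibility of $p(a)-\lambda e$ to invertibility of the commuting linear factors $a-\mu_i e$. In fact you are more careful than the paper on two points it leaves implicit, namely the lemma that a product of commuting elements is invertible only if each factor is (via $y(xy)^{-1}$ being an inverse of $x$), and the observation that the constant-polynomial case silently requires nonemptiness of the spectrum, a fact the paper only establishes afterwards.
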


\begin{proof}
If $p$ is a constant polynomial, the statement is trivial. Therefore, we can assume that $p(z) \in \mathbb{C}[z] \setminus \mathbb{C}$. By the Fundamental Theorem of Algebra (\cite[Theorem 3.5]{C1}), the polynomial $q(z) = p(z) - \lambda$ can be factorized into linear polynomials: $p(z) - \lambda = c \prod_{i=1}^n (z - \beta_i)$. Now, let's suppose that $\lambda$ is an element of $p(\sigma(a))$. Then, there must exist $\beta \in \sigma(a)$ such that $\lambda = p(\beta)$, meaning that the polynomial $q(z) = c \prod_{i=1}^n (z - \beta_i)$ vanishes at $\beta$. This implies that one of the factors vanishes at $\beta$, thus $\beta = \beta_i$ for some $i$. The element $a - \beta e = a - \beta_i e$ is non-invertible, and therefore $p(a) - \lambda e = c \prod_{i=1}^n (a - \beta_i e)$ is non-invertible, which means $\lambda \in \sigma(p(a))$. On the other hand, if we take $\lambda \in \sigma(p(a))$, then the element $p(a) - \lambda e = c \prod_{i=1}^n (a - \beta_i e)$ is non-invertible. This implies that at least one of the factors $a - \beta_i e$ is non-invertible for some $i$. Without loss of generality, we can assume that $a - \beta_1 e$ is non-invertible, meaning that $\beta_1 \in \sigma(a)$. As a consequence, the polynomial $z - \beta_1$ vanishes at the spectrum of $a$. Thus, we have $0 = q(\beta_1) = p(\beta_1) - \lambda$, or equivalently, $\lambda = p(\beta_1)$, which means $\lambda \in p(\sigma(a))$.
\end{proof}

\begin{theorem}  \label{espectrocompacto}
The spectrum $\sigma(a)$ of an element $a \in \mathcal{A}$ is a compact set in $\mathbb{C}$.
\end{theorem}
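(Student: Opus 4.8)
The plan is to invoke the Heine--Borel characterization of compactness in $\mathbb{C}$, which reduces the statement to verifying that $\sigma(a)$ is both bounded and closed. Both properties fall out almost immediately from Corollary \ref{espectroacotado}, since the substantive analytic work has already been carried out there.

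For boundedness, I would argue on the resolvent and pass to complements. The second statement of Corollary \ref{espectroacotado} guarantees that whenever $\|a\| < |\lambda|$ the element $\lambda e - a$ is invertible, so $\lambda \in \rho(a)$. Contrapositively, every $\lambda \in \sigma(a)$ must satisfy $|\lambda| \leq \|a\|$, whence $\sigma(a) \subseteq \overline{B(0, \|a\|)}$ and $\sigma(a)$ is bounded.

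For closedness, it suffices to show that the resolvent set $\rho(a)$ is open, as $\sigma(a)$ is its complement in $\mathbb{C}$. I would introduce the continuous affine map $\varphi: \mathbb{C} \to \mathcal{A}$ given by $\varphi(\lambda) = \lambda e - a$ and note that $\rho(a) = \varphi^{-1}(G(\mathcal{A}))$. Since $G(\mathcal{A})$ is open by the first statement of Corollary \ref{espectroacotado} and preimages of open sets under continuous maps are open, $\rho(a)$ is open. A more hands-on alternative avoids naming $\varphi$: given $\lambda_0 \in \rho(a)$, the element $\lambda_0 e - a$ is invertible, and for any $\lambda$ with $|\lambda - \lambda_0| < 1/\|(\lambda_0 e - a)^{-1}\|$ one has $\|(\lambda_0 e - a) - (\lambda e - a)\| = |\lambda - \lambda_0|$, so the first statement of the corollary forces $\lambda e - a$ to be invertible, placing an entire neighborhood of $\lambda_0$ inside $\rho(a)$.

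I do not expect any genuine obstacle here, since the openness of $G(\mathcal{A})$ and the invertibility of $\lambda e - a$ for large $|\lambda|$ do all the real work. The only point requiring mild care is the continuity of $\varphi$ — equivalently the identity $\|(\lambda_0 - \lambda)e\| = |\lambda - \lambda_0|$ — which rests on $\|e\| = 1$, a fact already recorded in the text.
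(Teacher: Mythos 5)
Your proposal is correct and follows essentially the same route as the paper: boundedness from Corollary \ref{espectroacotado}(2), and closedness by pulling back the open set $G(\mathcal{A})$ along the continuous map $\lambda \mapsto \lambda e - a$ (the paper's $F_a$, your $\varphi$), then invoking Heine--Borel. Your extra remarks---the explicit contrapositive for boundedness, the hands-on neighborhood argument, and the observation that continuity of $\varphi$ rests on $\|e\|=1$---only make explicit what the paper leaves implicit.
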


\begin{proof}
Indeed, as a consequence of Corollary \ref{espectroacotado}, the spectrum of an element is bounded. Now, by the continuity of the function $F_a(\lambda) := \lambda e - a \in \mathcal{A}$ defined for every $\lambda \in \mathbb{C}$, we have that $F_a^{-1}(G(\mathcal{A})) = \rho(a)$ is open in $\mathbb{C}$ since $G(\mathcal{A})$ is open, according to Corollary \ref{espectroacotado}(1). Therefore, its complement $\sigma(a)$ is closed. In conclusion, $\sigma(a)$ is both closed and bounded in $\mathbb{C}$, making it a compact set.
\end{proof}

\begin{definition}
The \emph{Resolvent Function} of an element $a \in \mathcal{A}$ is the function
\begin{eqnarray*}
R_a: \rho(a) &\to& G(\mathcal{A}) \\
\lambda &\mapsto& (\lambda e - a)^{-1}.
\end{eqnarray*}
\end{definition}

\begin{lemma}\label{espectronovacio}
For every $a \in \mathcal{A}$, $\sigma(a)$ is a non-empty set.
\end{lemma}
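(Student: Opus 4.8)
The plan is to argue by contradiction, assuming $\sigma(a) = \emptyset$, so that $\rho(a) = \mathbb{C}$ and the resolvent function $R_a(\lambda) = (\lambda e - a)^{-1}$ is defined on the entire complex plane. The strategy is the classical one: to show that $R_a$ behaves like an entire function that vanishes at infinity, and then to derive a contradiction via Liouville's theorem. Since $R_a$ takes values in the Banach space $\mathcal{A}$ rather than in $\mathbb{C}$, I would reduce to the scalar setting by composing with an arbitrary continuous linear functional $\varphi \in \mathcal{A}^*$.

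First I would establish the resolvent identity. Starting from $(\mu e - a) - (\lambda e - a) = (\mu - \lambda)e$ and multiplying on the left by $R_a(\lambda)$ and on the right by $R_a(\mu)$, one obtains
\begin{equation*}
R_a(\lambda) - R_a(\mu) = (\mu - \lambda)\,R_a(\lambda)R_a(\mu).
\end{equation*}
Dividing by $\lambda - \mu$ and letting $\lambda \to \mu$, the continuity of inversion from Corollary \ref{funct_inv_cont} yields that $R_a$ is complex-differentiable with derivative $-R_a(\mu)^2$. Consequently, for each $\varphi \in \mathcal{A}^*$, the scalar function $g(\lambda) := \varphi(R_a(\lambda))$ is entire.

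Next I would control the growth of $R_a$ at infinity. For $|\lambda| > \|a\|$, the explicit series from Corollary \ref{espectroacotado}(2) gives
\begin{equation*}
\|R_a(\lambda)\| \leq \frac{1}{|\lambda|}\sum_{n=0}^{\infty}\frac{\|a\|^n}{|\lambda|^n} = \frac{1}{|\lambda| - \|a\|},
\end{equation*}
so $\|R_a(\lambda)\| \to 0$ as $|\lambda| \to \infty$. Since $\varphi$ is continuous, $g(\lambda) \to 0$ as well; in particular $g$ is a bounded entire function, hence constant by Liouville's theorem, and the vanishing at infinity forces $g \equiv 0$. As this holds for every $\varphi \in \mathcal{A}^*$, the Hahn--Banach theorem implies $R_a(\lambda) = 0$ for all $\lambda$. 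But $R_a(\lambda)$ is by definition invertible and therefore nonzero, a contradiction. Hence $\sigma(a)$ cannot be empty.

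I expect the main obstacle to be the passage from the scalar Liouville theorem to a statement about the $\mathcal{A}$-valued function $R_a$; the role of Hahn--Banach is precisely to linearize that difficulty, allowing us to work one functional at a time. A secondary point requiring care is verifying that $g$ is genuinely holomorphic rather than merely continuous, which is why establishing the resolvent identity and the resulting differentiability of $R_a$ must come first.
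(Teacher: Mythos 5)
Your proposal is correct and follows essentially the same route as the paper's own proof: contradiction, the resolvent identity, analyticity of $\varphi \circ R_a$ for each continuous functional, the decay estimate $\|R_a(\lambda)\| \leq 1/(|\lambda| - \|a\|)$, Liouville's theorem, and Hahn--Banach to conclude $R_a \equiv 0$. The only (harmless) differences are that you get boundedness of $g$ directly from continuity plus vanishing at infinity, where the paper invokes the Uniform Boundedness Principle, and your closing contradiction ($R_a(\lambda)=0$ cannot be an inverse since $e \neq 0$) is slightly cleaner than the paper's evaluation at $\lambda = 0$.
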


\begin{proof}
Consider the case $a \neq 0$, as if $a = 0$ then $\sigma(a) = {0}$. Suppose, by contradiction, that $\sigma(a)$ is empty, which is equivalent to $\rho(a) = \mathbb{C}$. In this case, the function $R_a$ is defined for all $\mathbb{C}$ and, moreover, $R_a = \mathrm{inv} \circ F_a$ is continuous, where $F_a(\lambda) = \lambda e - a$.
Now, rewriting the difference $R_a(\mu) - R_a(\lambda)$, we have:
\begin{eqnarray*}
R_a(\mu) - R_a(\lambda) &=& (\mu e - a)^{-1}-(\lambda e- a)^{-1} \\
&=& (\mu e - a)^{-1}((\lambda e - a)\\
&& -(\mu e - a))(\lambda e-a)^{-1} \\
&=& (\mu e - a)^{-1}(\lambda - \mu)(\lambda e - a)^{-1}\\
&=& R_a(\mu)(\lambda-\mu)R_a(\lambda)\\
&=& (\lambda-\mu) R_a(\mu)R_a(\lambda).
\end{eqnarray*}
Therefore, for $\lambda \neq \mu$, we have:
\begin{equation}\label{eq:R}
\dfrac{R_a(\mu) - R_a(\lambda)}{\mu-\lambda}=-R_a(\mu)R_a(\lambda).
\end{equation}
\noindent \textbf{Claim 1:} The function $R_a: \mathbb{C} \to \mathcal{A}$ is an analytic function.

\noindent Following \cite[\S 10.3]{C3}, we need to prove that the function $f = \phi \circ R_a$ is a complex-analytic function, for each   $\phi \in \mathcal{A}^{\vee}$. Here,   $\mathcal{A}^{\vee}$ is the dual space (topological), i.e., the space of all continuous linear functionals $\phi: \mathcal{A} \to \mathbb{C}$. From equation (\ref{eq:R}), we have:
\begin{eqnarray*}
\dfrac{f(\mu) - f(\lambda)}{\mu-\lambda}&=&\dfrac{\phi(R_a(\mu))-\phi(R_a(\lambda))}{\mu-\lambda}\\
&=&\phi\left(\dfrac{R_a(\mu))-R_a(\lambda)}{\mu-\lambda}\right)\\
&=&\phi(-R_a(\mu)R_a(\lambda)).
\end{eqnarray*}
\noindent Taking limits in this expression as $\mu \to \lambda$, we conclude, by the continuity of $\phi$, that $f'(\lambda)$ exists, i.e., $f$ is analytic over $\mathbb{C}$ and, therefore, the function $R_a$ is analytic.

\noindent \textbf{Claim 2:} The function $R_a: \mathbb{C} \to \mathcal{A}$ is a constant function.

\noindent First, note that if $\|a\| < |\lambda|$, then we have the inequality:
$$
\|R_a(\lambda)\|=\|(\lambda e-a)^{-1}\| \leq 
|\dfrac{1}{\lambda}|\sum_{n=0}^{\infty}\|\dfrac{a}{\lambda}\|=\dfrac{1}{|\lambda|-\|a\|}.
$$
\noindent This implies that $\lim_{\lambda \to \infty}|R_a(\lambda)| = 0$, and in particular, $R_a$ is bounded. The Uniform Boundedness Principle (\cite[\S 6.7]{C3}) guarantees that $\phi \circ R_a$ is a bounded (entire analytic) function for any $\phi \in \mathcal{A}^{\vee}$. Consequently, by Claim 1 and the Liouville's Theorem for functions of one complex variable (\cite[Theorem 3.5]{C1}), $\phi \circ R_a$ is a constant function for each $\phi \in \mathcal{A}^{\vee}$. Finally, the Hahn-Banach Theorem (\cite[\S 6.3]{C3}) guarantees that $R_a$ is a constant function.

\noindent Now, from Claim 2 and $\lim\limits_{\lambda \to \infty}|R_a(\lambda)| = 0$, it follows that $R_a(\lambda) = 0$ for each $\lambda \in \mathbb{C}$. In particular, $R_a(0) = a^{-1} = 0$, which contradicts our assumption and proves the lemma.
\end{proof}

An interesting consequence of Lemma \ref{espectronovacio} is the important Gelfand-Mazur Theorem. Before some definitions will be necessary, which we introduce below.

\begin{definition}[$\ast$-homomorphism]
A function $\phi: \mathcal{A} \to \mathcal{B}$ between two commutative $C^*$-algebras is a \emph{$\ast$-homomorphism} if for every $a, b \in \mathcal{A}$ and $\lambda \in \mathbb{C}$, the following conditions hold:
\begin{itemize}
\item $\phi(a+\lambda b) = \phi(a) + \lambda \phi(b)$,
\item $\phi(ab) = \phi(a)\phi(b)$,
\item $\phi(a^*) = \phi(a)^*$.
\item The function $\phi$ is a $\ast$-isomorphism if it is bijective. The function $\phi$ is also an \emph{isometric $\ast$-isomorphism} if $\|\phi(a)\| = \|a\|$.
\end{itemize}
In particular, each $\ast$-homomorphism $\phi: \mathcal{A} \to \mathbb{C}$ is an element in the topological dual of $\mathcal{A}$.
\end{definition}

\begin{theorem}[Gelfand-Mazur \cite{BD}, Thm 14.7]\label{GMazur}
If $\mathcal{A}$ is a unital $C^{\ast}$-algebra in which every nonzero element is invertible, then $\mathcal{A}$ is isometrically $\ast-$isomorphic to $\mathbb{C}$.
\end{theorem}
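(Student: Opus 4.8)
The plan is to show that the map $\mathbb{C} \to \mathcal{A}$ sending $\lambda \mapsto \lambda e$ is an isometric $\ast$-isomorphism, using the nonvacuity of the spectrum established in Lemma \ref{espectronovacio} as the crucial input. First I would observe that this map is already an injective $\ast$-homomorphism: it is clearly linear and multiplicative, it sends $1 \mapsto e$, and it respects the involution since $(\lambda e)^* = \overline{\lambda}\, e^* = \overline{\lambda}\, e$ (recall $e^* = e$ was noted just after Definition \ref{defC*algebra}); injectivity follows because $\|\lambda e\| = |\lambda|\,\|e\| = |\lambda|$, which simultaneously gives that the map is isometric. So the entire content of the theorem reduces to proving \emph{surjectivity}, i.e.\ that every $a \in \mathcal{A}$ equals $\lambda e$ for some scalar $\lambda$.

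The key step, and the main obstacle, is precisely this surjectivity, and it is where Lemma \ref{espectronovacio} does the real work. Given $a \in \mathcal{A}$, I would invoke the lemma to pick some $\lambda \in \sigma(a)$, which is nonempty. By Definition \ref{def_Resolvente_Espectro}, $\lambda \in \sigma(a)$ means that $\lambda e - a$ is \emph{not} invertible in $\mathcal{A}$. But the hypothesis of the theorem states that every nonzero element of $\mathcal{A}$ is invertible; the contrapositive forces the non-invertible element $\lambda e - a$ to be zero. Hence $a = \lambda e$, which is exactly the claim that $a$ lies in the image of $\mathbb{C}$. This argument is clean once the nonvacuity of the spectrum is in hand, so the genuine difficulty was already absorbed into Lemma \ref{espectronovacio} (the Liouville/Hahn--Banach argument), and here it is only a matter of correctly applying the definition of spectrum together with the invertibility hypothesis.

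To assemble the proof I would then note that the two facts---injectivity/isometry and surjectivity---together show that $\lambda \mapsto \lambda e$ is a bijective isometric $\ast$-homomorphism, and its inverse $\mathcal{A} \to \mathbb{C}$ (sending $\lambda e \mapsto \lambda$) is therefore also an isometric $\ast$-isomorphism, establishing that $\mathcal{A}$ is isometrically $\ast$-isomorphic to $\mathbb{C}$. One subtlety worth addressing explicitly is well-definedness of the scalar: since the representation $a = \lambda e$ is unique (if $\lambda e = \mu e$ then $(\lambda - \mu)e = 0$ forces $\lambda = \mu$ because $\|e\| = 1 \neq 0$), the spectrum $\sigma(a)$ is in fact the single point $\{\lambda\}$, and no ambiguity arises in defining the inverse map. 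This uniqueness remark also reassures that the isomorphism is genuinely a scalar identification rather than a coincidence for one choice of $\lambda$.
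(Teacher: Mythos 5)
Your proposal is correct and takes essentially the same approach as the paper: both proofs hinge on Lemma \ref{espectronovacio} to produce $\lambda \in \sigma(a)$, then use the hypothesis that every nonzero element is invertible to force $\lambda e - a = 0$, so that $a = \lambda e$. The only cosmetic difference is that you build the isomorphism $\mathbb{C} \to \mathcal{A}$, $\lambda \mapsto \lambda e$, while the paper defines its inverse $\rho: \mathcal{A} \to \mathbb{C}$, $a \mapsto \lambda_a$; your write-up is in fact somewhat more complete, since you explicitly verify the $\ast$-homomorphism and isometry properties that the paper leaves implicit.
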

\begin{proof}
Let $a$ be a non-zero element of $\mathcal{A}$. By Lemma \ref{espectronovacio}, there exists $\lambda_a$ such that $\lambda_a e - a$ is non-invertible. Consequently, since the only non-invertible element is zero, we have $0 = \lambda_a e - a$. Clearly, $\lambda_a$ is unique, because if there existed $\lambda_1 \in \sigma(a)$ with this property, then $\lambda_a e - a = 0 = \lambda_1 e - a$, and therefore $\lambda_a = \lambda_1$. This allows us to define the function $\rho: \mathcal{A} \to \mathbb{C}$, where $\rho(a) = \lambda_a$. Since $a = \lambda_a e$, we conclude that $\rho$ is an isometric $\ast$-isomorphism between $\mathcal{A}$ and $\mathbb{C}$, as desired.
\end{proof}

\begin{definition}
Let $a \in \mathcal{A}$, the \emph{Spectral Radius of $a$} is defined as
\begin{equation*}
r(a) := \sup_{\lambda \in \sigma(a)}|\lambda|.
\end{equation*}
\end{definition}

\begin{example}\label{rad_norma_C}
Let $X$ be a Hausdorff-Compact space and $f \in C(X)$. In this case,
$$
		r(f)=\sup\{|\lambda| : 
		\lambda \in \sigma(f)\}=\sup\{|\lambda|: \lambda \in \text{Im}(f)\}=\| f  \|_{\infty}.
		$$

\end{example}

\begin{theorem}\label{radius_formula_theorem}
For each $a \in \mathcal{A}$, it holds that
\begin{equation*}
r(a) = \lim_{n\to \infty}\| a^n \|^{1/n} = \liminf_{n\in\mathbb{N}}\| a^n \|^{1/n}.   
\end{equation*}
In particular, if $a=a^*$, then $r(a)=|a|$.
\end{theorem}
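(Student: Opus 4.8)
The plan is to establish the two inequalities
$$r(a) \le \liminf_{n\to\infty} \|a^n\|^{1/n} \qquad\text{and}\qquad \limsup_{n\to\infty} \|a^n\|^{1/n} \le r(a),$$
which together force the $\liminf$, the $\limsup$, and hence the limit to exist and coincide with $r(a)$.

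For the first (easy) inequality I would invoke the spectral mapping property of Proposition \ref{TeoremaEspectroPoli} with $p(z)=z^n$, which gives $\sigma(a^n)=\sigma(a)^n$. Thus every $\lambda\in\sigma(a)$ produces $\lambda^n\in\sigma(a^n)$. Since Corollary \ref{espectroacotado}(2) shows that any element $b$ satisfies $\sigma(b)\subseteq\{|\mu|\le\|b\|\}$, we obtain $|\lambda|^n\le\|a^n\|$, i.e. $|\lambda|\le\|a^n\|^{1/n}$ for every $n$. Taking the supremum over $\lambda\in\sigma(a)$ yields $r(a)\le\|a^n\|^{1/n}$ for all $n$, hence $r(a)\le\liminf_n\|a^n\|^{1/n}$.

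The second inequality is where I expect the main obstacle, and I would handle it via vector-valued complex analysis, reusing the machinery from the proof of Lemma \ref{espectronovacio}. For $|\lambda|>\|a\|$, Corollary \ref{espectroacotado}(2) gives the absolutely convergent expansion
$$R_a(\lambda)=(\lambda e-a)^{-1}=\sum_{n=0}^{\infty}\frac{a^n}{\lambda^{n+1}}.$$
Fix $\phi\in\mathcal{A}^\vee$. The scalar function $\lambda\mapsto\phi(R_a(\lambda))$ is analytic on $\rho(a)$, which contains the exterior region $\{|\lambda|>r(a)\}$, and it vanishes at infinity; its Laurent expansion there is $\sum_n\phi(a^n)\lambda^{-(n+1)}$. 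Writing $w=1/\lambda$, this is the Taylor series of an analytic function on the disk $\{|w|<1/r(a)\}$, so its radius of convergence is at least $1/r(a)$. Consequently $\limsup_n|\phi(a^n)|^{1/n}\le r(a)$, and in particular, for any fixed $\mu$ with $|\mu|>r(a)$ the scalars $\phi(a^n/\mu^n)$ are bounded in $n$, for every $\phi$. The crucial step is then to upgrade this weak boundedness to norm boundedness: viewing each $a^n/\mu^n$ as a functional on the Banach space $\mathcal{A}^\vee$ through the canonical evaluation $x\mapsto(\phi\mapsto\phi(x))$, the Uniform Boundedness Principle yields a constant $M=M(\mu)$ with $\|a^n/\mu^n\|\le M$ for all $n$. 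Hence $\|a^n\|^{1/n}\le M^{1/n}|\mu|\to|\mu|$, so $\limsup_n\|a^n\|^{1/n}\le|\mu|$ for every $|\mu|>r(a)$; letting $|\mu|\downarrow r(a)$ gives $\limsup_n\|a^n\|^{1/n}\le r(a)$. Combining the two parts proves the formula.

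Finally, for the self-adjoint case $a=a^*$ I would exploit the $C^*$-condition of Definition \ref{defC*algebra}. Since $(a^2)^*=(a^*)^2=a^2$, the element $a^2$ is again self-adjoint, and the $C^*$-condition gives $\|a^2\|=\|a^*a\|=\|a\|^2$. A straightforward induction then yields $\|a^{2^k}\|=\|a\|^{2^k}$, so that $\|a^{2^k}\|^{1/2^k}=\|a\|$ for every $k$. Because the limit defining $r(a)$ has just been shown to exist, evaluating it along the subsequence $n=2^k$ gives $r(a)=\lim_k\|a^{2^k}\|^{1/2^k}=\|a\|$, completing the proof.
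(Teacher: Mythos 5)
Your proof is correct, and on the crucial inequality it takes a genuinely different route from the paper --- one that, importantly, actually works where the paper's own argument does not. The easy half ($r(a)\le\liminf_{n}\|a^n\|^{1/n}$ via Proposition \ref{TeoremaEspectroPoli} and Corollary \ref{espectroacotado}) and the self-adjoint case (the doubling identity $\|a^{2^k}\|=\|a\|^{2^k}$ and passage to the limit along $n=2^k$) coincide with the paper's. For the hard half, the paper argues with the Neumann series: by the root test, $\sum_{n} a^n/\lambda^{n+1}$ converges whenever $|\lambda|>\limsup_{n}\|a^n\|^{1/n}$, hence every such $\lambda$ lies in $\rho(a)$; the paper then asserts that ``taking the supremum'' yields $r(a)\ge\limsup_{n}\|a^n\|^{1/n}$. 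But that implication only says $\sigma(a)\subseteq\{\lambda:|\lambda|\le\limsup_{n}\|a^n\|^{1/n}\}$, i.e. $r(a)\le\limsup_{n}\|a^n\|^{1/n}$ --- the same direction as the easy half --- so the paper's chain of inequalities does not close as written. Your argument supplies exactly the missing ingredient: since $R_a$ is analytic on all of $\{|\lambda|>r(a)\}$ (by the resolvent-identity computation from Lemma \ref{espectronovacio}) and vanishes at infinity, uniqueness of Laurent/Taylor expansions forces the series $\sum_{n}\phi(a^n)\lambda^{-(n+1)}$ to converge on that whole region for every $\phi\in\mathcal{A}^{\vee}$; Cauchy--Hadamard then gives $\limsup_{n}|\phi(a^n)|^{1/n}\le r(a)$, and the Uniform Boundedness Principle applied to the elements $a^n/\mu^n$, viewed in the bidual $\mathcal{A}^{\vee\vee}$ through the canonical isometric embedding, converts these weak bounds into norm bounds $\|a^n/\mu^n\|\le M$, whence $\limsup_{n}\|a^n\|^{1/n}\le|\mu|$ for every $|\mu|>r(a)$. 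This is the classical Beurling--Gelfand argument; what it buys is precisely the inequality $\limsup_{n}\|a^n\|^{1/n}\le r(a)$, which no root-test argument can deliver, and the paper's proof would need to be repaired along exactly these lines.
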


\begin{proof}
Let $ \lambda \in \sigma(a)$, by Proposition \ref{TeoremaEspectroPoli}, we have
$ \lambda^n \in \sigma(a^n) $ for every $ n \in \mathbb{N} $. From Corollary \ref{espectroacotado},
we have $ |\lambda^n| \leq | a^n | $, or equivalently,
\begin{equation*}
|\lambda| \leq \| a^n \|^{1/n}.
\end{equation*}
Taking the supremum over $ \lambda \in \sigma(a) $ and the $\liminf$ over $ n \in \mathbb{N} $
yields
\begin{equation}\label{eq_fimpl_spectral_radius}
r(a) \leq \liminf_{n\in \mathbb{N}} \| a^n\|^{1/n}.
\end{equation}
\noindent For the other inequality, consider the function
$$
h(\lambda):=\lambda \sum_{n=0}^{\infty}\dfrac{a^n}{\lambda^{n}}.
$$ 
\noindent If $|\lambda|$ is such that $|\lambda|>\limsup\limits_{n\in \mathbb{N}}{\|a^n\|^{1/n}}$, then by applying the root test, the series $\sum_{n=0}^{\infty}\dfrac{\|a^n\|}{|\lambda|^{n}}$ converges
in $\mathbb{R}$. As a consequence of Corollary \ref{funct_inv_cont}, the series
$\lambda\sum_{n=0}^{\infty}\dfrac{a^n}{\lambda^{n}}$ converges absolutely in $\mathcal{A}$.
From the definition of $h(\lambda)$, we can conclude that
$h(\lambda)(\lambda e-a)=(\lambda e-a)h(\lambda)=e$, which means that if $|\lambda|>\limsup\limits_{n\in \mathbb{N}}{\|a^n\|^{1/n}}$,
then $\lambda e-a$ is invertible. Taking the supremum over $\lambda$, we have
$r(a)\geq \limsup\limits_{n\in \mathbb{N}}{\|a^n\|^{1/n}}$. In conclusion, $r(a)\geq \limsup\limits_{n\in \mathbb{N}}{\|a^n\|^{1/n}}
\geq\liminf\limits_{n\in \mathbb{N}} \|a^n\|^{1/n}\geq r(a)$, which proves the desired equality.

For the particular case, if $a=a^*$, then $\|a\|^2=\|a^*a\|=\|a^2\|$
and by induction $\|a\|^{2^n}=\|a^{2^n}\|$. From the latter relation, we have
$r(a) = \lim_{n\in \mathbb{N}} \| a^{2^n}\|^{1/2^n}=\lim_{n\in \mathbb{N}}\| a\|^{2^n/2^n}=\|a\|$.
\end{proof}

\begin{corollary}
The norm in a $C^{\ast}-$algebra is unique.
\end{corollary}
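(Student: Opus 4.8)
The plan is to show that the norm of every element is completely forced by the purely algebraic data of $\mathcal{A}$, so that any two norms making $\mathcal{A}$ into a $C^*$-algebra must agree. The bridge from the analytic notion of norm to the algebraic notion of spectrum is the spectral radius formula of Theorem \ref{radius_formula_theorem}, and in particular its self-adjoint case $r(a)=\|a\|$.

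First I would record that the spectrum $\sigma(a)$, and hence the spectral radius $r(a)=\sup_{\lambda\in\sigma(a)}|\lambda|$, does not depend on the norm. Indeed, $\sigma(a)$ is defined through invertibility of $\lambda e - a$, and invertibility means the existence of a two-sided inverse in $\mathcal{A}$, a condition that refers only to the algebra structure and to no norm whatsoever. Consequently, if $\|\cdot\|_1$ and $\|\cdot\|_2$ are two norms turning the same underlying $\ast$-algebra $\mathcal{A}$ into a $C^*$-algebra, they determine the very same invertible set $G(\mathcal{A})$, hence the same spectra, and therefore the same spectral radius function $r$.

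Next I would use the $C^*$-condition together with the self-adjoint case of the spectral radius formula to recover each norm from $r$. For any $a\in\mathcal{A}$ the element $a^*a$ satisfies $(a^*a)^*=a^*a$, so it is self-adjoint; by the final assertion of Theorem \ref{radius_formula_theorem} we have $\|a^*a\|_i=r(a^*a)$ for each $i$. Combining this with the $C^*$-condition $\|a\|_i^2=\|a^*a\|_i$ yields
\[
\|a\|_i^2=\|a^*a\|_i=r(a^*a),
\]
so that $\|a\|_i=\sqrt{r(a^*a)}$ for each $i$. Since the right-hand side is independent of $i$, we obtain $\|a\|_1=\|a\|_2$ for every $a\in\mathcal{A}$, which is exactly the claimed uniqueness.

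The only delicate point, and the one I would justify with care, is the norm-independence of the spectrum invoked in the first step: although the proofs of the surrounding results (openness of $G(\mathcal{A})$, non-emptiness of the spectrum, and the spectral radius formula itself) all make essential use of a norm and of completeness, the numerical value $r(a^*a)$ that they compute is an intrinsic, norm-free quantity, because each candidate norm legitimately satisfies the hypotheses of those theorems while producing the same set of invertible elements. Granting this, the remainder is a direct application of the $C^*$-condition and of the self-adjoint case of Theorem \ref{radius_formula_theorem}.
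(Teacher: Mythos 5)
Your proof is correct and takes essentially the same route as the paper's: both recover each norm from the algebraic quantity $r(a^*a)$ via the $C^*$-condition together with the self-adjoint case of Theorem \ref{radius_formula_theorem}, the paper compressing this into the single chain $\|a\|_1^2=\|a^*a\|_1=r(a^*a)=\|a^*a\|_2=\|a\|_2^2$. Your explicit justification that the spectrum, and hence the spectral radius, is a purely algebraic (norm-free) invariant is precisely the point the paper leaves implicit, and it is the right point to be careful about.
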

\begin{proof}
Suppose $\mathcal{A}$ has two norms $\|\cdot\|_1$ and $\|\cdot\|_2$ for which
$\|a\|_1^2=\|a^*a\|_1$ and $\|a\|_2^2=\|a^*a\|_2$. By Theorem \ref{radius_formula_theorem},
we have $\|a\|_1^2=\|a^*a\|_1=r(a^*a)=\|a^*a\|_2=\|a\|_2^2$, and therefore $\|a\|_1=\|a\|_2$.
\end{proof}

\begin{remark}
As a consequence of Theorem \ref{radius_formula_theorem}  and Corollary \ref{espectroacotado}, we have $ r(a) \leq \| a \| $. However, there are $C^*$-algebras where the strict inequality holds. For example, consider the matrix $a = \begin{pmatrix} 0&1\\0&0 \end{pmatrix}$, which clearly has a norm greater than zero. Its spectrum consists only of the element zero, so $r(a)=0$, and thus $r(a)<\|a\|$. It is not a coincidence that this example is in a non-commutative $C^*$-algebra. In fact, as a consequence of the categorical version of the Gelfand-Naimark Theorem (see Theorem \ref{GGN}), we will conclude that we always have equality for commutative $C^*$-algebras with unity (see Theorem \ref{radio_norma}).

On the other hand, since $ \lambda e - a $ is invertible for every $ |\lambda| > \| a \|$,
the reasoning in the second part of the proof of the previous theorem guarantees that
if $ \lambda > r(a)$, then $\lambda e -a$ is invertible.
\end{remark}

\end{section}
% %%%%%%%%%%%%%%%%%%%%%%%%%%%%%%%%%%%%%%%%%%%%%%%%%%%

\begin{section}{\texorpdfstring{The spectrum of a $C^*$-algebra}{The spectrum of a C*-algebra}} \label{EspectroCalg}

In this section, we begin by gathering the necessary results to prove the main theorem of this work: the categorical version of the Gelfand--Naimark Theorem.

We start by extending the concept of the spectrum from an element to the spectrum of a $C^{\ast}$-algebra. This set turns out to be a Hausdorff-compact topological space on which we can define continuous functions. Through the construction of continuous functions on the spectrum of an algebra $\mathcal{A}$ (via the Gelfand Transform, Theorem \ref{gelfand_theorem}), we establish the connection between $C^*$-algebras and topological spaces.

\begin{definition}[Spectrum of a $C^{\ast}-$algebra] \label{espectro_algebra}
The \emph{spectrum of the $C^{\ast}-$algebra} $\mathcal{A}$ is defined as the set
\begin{equation*}
\widehat{\mathcal{A}} = \{\phi:\mathcal{A} \to \mathbb{C}|\, \phi 
\text{ it is a nonzero $\ast-$homomorphism}\}
\end{equation*}
\end{definition}

We recall that all the $C^{\ast}-$algebras of interest are unital, and thus, if $\phi \in \widehat{\mathcal{A}}$, then $\phi(e) = 1$.

An important concept in the theory of $C^{\ast}$-algebras, as will become evident from the following proposition and the further development of this article, is the notion of an \emph{ideal}. We define an ideal $I$ as a self-adjoint subspace of $\mathcal{A}$ such that $ab \in I$ for all $a \in \mathcal{A}$ and $b \in I$. An ideal $I$ is called \emph{proper} if it is a proper subset of $\mathcal{A}$. Clearly, $I$ is proper if and only if $1 \notin I$, which is also equivalent to $I$ being disjoint from $G(\mathcal{A})$. 

Associated with each $\ast$-homomorphism $\phi: \mathcal{A} \to \mathbb{C}$, the set $\ker(\phi)$, defined as the elements $a \in \mathcal{A}$ such that $\phi(a) = 0$ and known as the \emph{kernel}, is an important example of such sets.

\begin{proposition}\label{prop_homomorfismo_lipstchitz}
If $ \phi: \mathcal{A} \to \mathbb{C} $ is a $\ast$-homomorphism, then
$ \|\phi(a)\| \leq \| a \| $ for all $ a \in \mathcal{A} $. In particular, $ \phi $
is continuous.
\end{proposition}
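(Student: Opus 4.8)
The plan is to reduce everything to the single key fact that $\phi(a) \in \sigma(a)$ for every $a \in \mathcal{A}$, after which the bound follows from the inequality $r(a) \le \|a\|$ recorded in the remark following Theorem \ref{radius_formula_theorem}.

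First I would pin down the value of $\phi$ on the unit. Since $\phi$ is multiplicative, $\phi(e) = \phi(e\cdot e) = \phi(e)^2$, so $\phi(e) \in \{0,1\}$; and $\phi(e) = 0$ would force $\phi(a) = \phi(ae) = \phi(a)\phi(e) = 0$ for all $a$, contradicting that $\phi$ is a nonzero $\ast$-homomorphism. Hence $\phi(e) = 1$. Next, fixing $a$ and setting $\lambda := \phi(a)$, linearity gives $\phi(a - \lambda e) = \phi(a) - \lambda\phi(e) = 0$. I claim $a - \lambda e$ is not invertible: if it were, applying $\phi$ to the identity $(a-\lambda e)(a-\lambda e)^{-1} = e$ and using multiplicativity together with $\phi(e) = 1$ would yield $\phi(a-\lambda e)\,\phi\big((a-\lambda e)^{-1}\big) = 1$, forcing $\phi(a-\lambda e) \neq 0$, a contradiction. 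Therefore $a - \lambda e \notin G(\mathcal{A})$, which by Definition \ref{def_Resolvente_Espectro} means precisely $\lambda = \phi(a) \in \sigma(a)$.

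To finish, I would invoke the norm bound on the spectrum: by Corollary \ref{espectroacotado}(2), if $\|a\| < |\lambda|$ then $\lambda e - a$ is invertible, so no such $\lambda$ lies in $\sigma(a)$; equivalently every $\lambda \in \sigma(a)$ satisfies $|\lambda| \le \|a\|$. Combining this with $\phi(a) \in \sigma(a)$ gives $|\phi(a)| \le \|a\|$, which is the desired estimate. Continuity is then immediate: by linearity $|\phi(a) - \phi(b)| = |\phi(a-b)| \le \|a-b\|$, so $\phi$ is Lipschitz with constant $1$ and hence continuous.

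The main obstacle is establishing the containment $\phi(a) \in \sigma(a)$; this is where the structural hypotheses on $\phi$ (multiplicativity and $\phi(e)=1$) are genuinely used, via the observation that a $\ast$-homomorphism must send invertible elements to nonzero scalars. Once that lemma is secured, both the norm inequality and the continuity conclusion reduce to the already-proved spectral bound and to elementary properties of linear maps.
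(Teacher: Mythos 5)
Your proof is correct and follows essentially the same route as the paper's: both arguments reduce the bound to the containment $\phi(a)\in\sigma(a)$ and then invoke Corollary \ref{espectroacotado} to get $|\phi(a)|\leq\|a\|$. The only cosmetic difference is that the paper deduces non-invertibility of $a-\phi(a)e$ from the fact that $\ker(\phi)$ is a proper ideal, while you verify directly that $\phi$ cannot vanish on an invertible element; these are the same observation in two guises.
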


\begin{proof}
Since $\phi$ is a non-zero $\ast$-homomorphism, it follows that $\ker(\phi)$ is a proper ideal of $\mathcal{A}$. Now, for $a \in \mathcal{A}$, we have $\phi(a - \phi(a)e) = 0$, and therefore $a - \phi(a)e \in \ker(\phi)$. This implies that $a - \phi(a)e$ is non-invertible. Thus, $\phi(a) \in \sigma(a)$, and by Corollary \ref{espectroacotado}, we conclude that $\|\phi(a)\| \leq \|a\|$. Furthermore, since $\phi(e) = 1$, we have $\|\phi\| = 1$.
\end{proof}

To prove the next result, we need to recall two important theorems. The first one, from Functional Analysis, is the Banach-Alaoglu Theorem \cite[Thm. 3.6, p.~66]{B}, which states that the closed unit ball of $\mathcal{A}^{\vee}$ is a compact set in the weak$^\ast$ topology. Recall that the weak$^\ast$ topology is the topology generated on $\mathcal{A}^{\vee}$ by all evaluation functionals. In this topology, the basic open sets $V_{\phi_0}$ at an element $\phi_0$ are characterized as $V_{\phi_0} = \{  \phi \in \mathcal{A}^{\vee} : |(\phi_0 - \phi)(a_i)| < \epsilon\}$ for a finite number of points $a_i \in \mathcal{A}$ \cite[Section 3.4, p.~64]{B}. 

The second result, from Topology, states that every closed subspace of a compact space is compact \cite[Ch.~3, Theorem 26.2, p.~165]{M}.

\begin{theorem}\label{espectroCompacAlg}
The spectrum $\widehat{\mathcal{A}}$ is a compact space in the weak$^\ast$ topology.
\end{theorem}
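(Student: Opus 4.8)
The plan is to realize $\widehat{\mathcal{A}}$ as a weak$^\ast$-closed subset of the closed unit ball $B=\{\phi\in\mathcal{A}^{\vee}:\|\phi\|\leq 1\}$ and then simply combine the two results recalled just before the statement. By the Banach--Alaoglu Theorem $B$ is weak$^\ast$-compact, and a closed subspace of a compact space is compact; so once I verify that $\widehat{\mathcal{A}}\subseteq B$ and that $\widehat{\mathcal{A}}$ is closed in $B$, compactness follows at once. The inclusion $\widehat{\mathcal{A}}\subseteq B$ is already available: Proposition \ref{prop_homomorfismo_lipstchitz} gives $|\phi(a)|\leq\|a\|$ for every $\ast$-homomorphism, hence $\|\phi\|\leq 1$ (in fact $\|\phi\|=1$, since $\phi(e)=1$).

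The core of the argument is the closedness, and the key observation is that for each fixed $x\in\mathcal{A}$ the evaluation map $\mathrm{ev}_x:\phi\mapsto\phi(x)$ is weak$^\ast$-continuous, by the very definition of the weak$^\ast$ topology. I would therefore write $\widehat{\mathcal{A}}$ as an intersection of preimages of closed subsets of $\mathbb{C}$ under continuous maps built from the $\mathrm{ev}_x$. Concretely, the conditions defining a nonzero $\ast$-homomorphism become:
\begin{itemize}
\item unitality: $\phi\in\mathrm{ev}_e^{-1}(\{1\})$;
\item multiplicativity: for every $a,b\in\mathcal{A}$, $\phi$ lies in the zero set of the continuous map $\phi\mapsto\phi(ab)-\phi(a)\phi(b)$;
\item compatibility with the involution: for every $a\in\mathcal{A}$, $\phi$ lies in the zero set of $\phi\mapsto\phi(a^{*})-\overline{\phi(a)}$.
\end{itemize}
No separate linearity condition is needed, since every element of $\mathcal{A}^{\vee}$ is already linear, and the nonzero requirement is enforced automatically by $\phi(e)=1$, which also excludes the trivial functional. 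Each displayed set is a preimage of a closed set ($\{0\}$ or $\{1\}$) under a weak$^\ast$-continuous map, hence closed; intersecting all of them with $B$ yields a set that is both closed in $B$ and equal to $\widehat{\mathcal{A}}$.

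The hard part — really the only nonroutine point — will be confirming that these algebraic conditions are genuinely \emph{closed} in the weak$^\ast$ topology. This reduces to checking that sums, products, and complex conjugates of the continuous evaluation maps $\mathrm{ev}_x$ remain continuous, together with the triviality that $\{0\}$ and $\{1\}$ are closed in $\mathbb{C}$; everything else is bookkeeping. Finally, I would note that the family $\{\mathrm{ev}_x\}_{x\in\mathcal{A}}$ separates points of $\mathcal{A}^{\vee}$, so the weak$^\ast$ topology is Hausdorff; consequently $\widehat{\mathcal{A}}$ is in fact compact \emph{and} Hausdorff, which is precisely the type of space required for the identification with a space of continuous functions carried out in the next section.
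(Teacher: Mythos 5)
Your proof is correct, and while it shares its skeleton with the paper's proof (both rest on the Banach--Alaoglu Theorem plus the fact that a closed subset of a compact space is compact, and both use Proposition \ref{prop_homomorfismo_lipstchitz} to place $\widehat{\mathcal{A}}$ inside the unit ball), the way you establish closedness is genuinely different. The paper takes a sequence $\phi_n\to\phi$ of elements of $S=\widehat{\mathcal{A}}\cup\{0\}$, passes to the limit in the identities $\phi_n(ab)=\phi_n(a)\phi_n(b)$ and $\phi_n(a^*)=\overline{\phi_n(a)}$, concludes $S$ is closed, and then removes the adjoined point $0$ by arguing it is isolated (no norm-one homomorphisms converge to it). You instead exhibit $\widehat{\mathcal{A}}$ directly as the intersection of $B$ with the sets $\mathrm{ev}_e^{-1}(\{1\})$, $\{\phi:\phi(ab)-\phi(a)\phi(b)=0\}$, and $\{\phi:\phi(a^*)-\overline{\phi(a)}=0\}$, each a preimage of a closed subset of $\mathbb{C}$ under a weak$^\ast$-continuous map; the unitality constraint $\phi(e)=1$ does double duty, enforcing nonvanishing so that the point $0$ never has to be adjoined and later excised. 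Your route buys two things. First, it is strictly more rigorous in full generality: the weak$^\ast$ topology on the unit ball of $\mathcal{A}^{\vee}$ is metrizable only when $\mathcal{A}$ is separable, so the paper's sequential test for closedness is, strictly speaking, insufficient for nonseparable $\mathcal{A}$ (one would need nets); your argument never mentions convergence at all. Second, it eliminates the $S=\widehat{\mathcal{A}}\cup\{0\}$ detour and the isolation argument. The paper's version, in exchange, is more hands-on and makes the limit-preservation of the algebraic identities explicit, which is the intuition behind the result. Your closing remark that the weak$^\ast$ topology is Hausdorff (so $\widehat{\mathcal{A}}$ is compact Hausdorff, as needed for $C(\widehat{\mathcal{A}})$ in Section \ref{TransformadaG}) is a useful observation the paper leaves implicit.
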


\begin{proof}
Let $S = \widehat{\mathcal{A}} \cup \{0\} \subseteq \mathcal{A}^{\vee}$. By Proposition \ref{prop_homomorfismo_lipstchitz}, $S$ is a subset of the unit ball.

\noindent Consider a sequence $\phi_n$ of functionals in $S$ such that $\phi_n \to \phi$. We have $\phi(a)\phi(b) = \lim\limits_{n\to\infty} \phi_n(a)\lim\limits_{n\to\infty} \phi_n(b) = \lim\limits_{n\to\infty} \phi_n(ab) = \phi(ab)$, which shows that $\phi$ is multiplicative. Similarly, we obtain $\phi(a^*) = \overline{\phi(a)}$, and thus $\phi$ is an element of $\widehat{\mathcal{A}}$. Consequently, $S$ is a closed subset of the unit ball in the weak$^\ast$ topology, and therefore, $S$ is compact. Since there are no sequences of norm-one $\ast$-homomorphisms converging to zero, zero is an isolated point of $S$. Hence, $\widehat{\mathcal{A}}$ is compact.
\end{proof}

Within the set of all ideals of $\mathcal{A}$, the \emph{maximal ideals} (in terms of set inclusion) play an important role in the theory of $C^{\ast}$-algebras. For example, a fundamental fact is that every proper ideal $J_0$ is contained in a maximal ideal $J$, as an application of Zorn's Lemma. The maximality of $J$ implies that it must be closed, since its closure is also an ideal.

Let $\mathcal{A}$ be an algebra and $I\subset \mathcal{A}$ a closed ideal. The set of equivalence classes $a+I\in\mathcal{A} / I$, defined by the relation $a+I=b+I$ if and only if $b-a\in I$, admits a natural algebraic structure, and with the operation $(a+I)^*:=a^*+I$, $\mathcal{A} / I$ becomes an algebra with involution. The function $\|\cdot\|: \mathcal{A} / I \to \mathbb{R}^+ \cup \{0\}$ defined by $\|a+I\|:=\inf{\|a+b\|, b\in I}$ defines a norm, and the fact that $I$ is closed in $\mathcal{A}$ ensures that the normed algebra $\mathcal{A} / I$ is complete. This norm satisfies $\|a^*+I\|=\|a+I\|$, and consequently, $\|(a^*+I)(a+I)\|\leq \|a^*+I\|\|a+I\|\leq \|a+I\|^2$. To prove that $\mathcal{A} / I$ is a $C^*$-algebra, it suffices to show that $\|a+I\|^2\leq \|(a^*+I)(a+I)\|$. For this, consider $b\in I$ and the norm $\|a+b\|^2$. Since $\mathcal{A}$ is a $C^{\ast}-$algebra, we have $\|a+b\|^2=\|(a+b)(a^*+b^*)\|=\|aa^*+ab^*+ba^*+bb^*\|$. Here, $c=ab^*+ba^*+bb^*\in I$ for any $a\in A$, and therefore, $\inf{\|a+b\|^2, b\in I}\leq \inf{\|aa^*+c\|, c=ab^*+ba^*+bb^*, b\in I}$, which implies $\|a+I\|^2\leq \|aa^*+I\|$. This establishes the equality and shows that $\mathcal{A}/I$ is a $C^*$-algebra.

In this case, the natural projection homomorphism $\pi:\mathcal{A}\rightarrow\mathcal{A}/I$, defined by $a\mapsto a+I$, is a surjective $\ast$-homomorphism, it satisfies

\begin{proposition}\label{prop_quot}
Let $\phi:\mathcal{A}\rightarrow\mathcal{B}$ be a $\ast$-homomorphism and let $I$ be a closed ideal of $\mathcal{A}$ such that $I\subseteq\ker(\phi)$. Then, there exists a unique $\ast$-homomorphism $\psi:\mathcal{A}/I\rightarrow\mathcal{B}$ such that $\phi=\psi\circ \pi$, where $\pi$ is the projection $\ast$-homomorphism. In particular, if $\phi$ is surjective, then we have a $\ast$-isomorphism $\mathcal{A}/\ker(\phi)\simeq \mathcal{B}$.
\end{proposition}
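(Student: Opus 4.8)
The plan is to mimic the first isomorphism theorem from algebra: I would define $\psi$ on classes by $\psi(a+I):=\phi(a)$ and then show that the closed-ideal structure makes this a well-defined $\ast$-homomorphism satisfying the required factorization. The first step, and the only one that uses the hypothesis $I\subseteq\ker(\phi)$, is well-definedness: if $a+I=b+I$, then $a-b\in I\subseteq\ker(\phi)$, so $\phi(a-b)=0$ and hence $\phi(a)=\phi(b)$, making the value $\psi(a+I)$ independent of the chosen representative. The factorization identity is then immediate, since $\psi(\pi(a))=\psi(a+I)=\phi(a)$ for every $a\in\mathcal{A}$.

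Next I would verify that $\psi$ is a $\ast$-homomorphism, using the algebraic operations on $\mathcal{A}/I$ recalled just before the statement. Each of the three defining conditions reduces to the corresponding property of $\phi$ together with the quotient operations $(a+I)+\lambda(b+I)=(a+\lambda b)+I$, $(a+I)(b+I)=ab+I$ and $(a+I)^*=a^*+I$; for instance $\psi((a+I)(b+I))=\psi(ab+I)=\phi(ab)=\phi(a)\phi(b)=\psi(a+I)\psi(b+I)$, and similarly for linearity and compatibility with the involution. For uniqueness I would exploit that $\pi$ is surjective: if $\psi'$ also satisfies $\psi'\circ\pi=\phi$, then for every class $a+I=\pi(a)$ we obtain $\psi'(a+I)=\phi(a)=\psi(a+I)$, forcing $\psi'=\psi$.

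For the final assertion I would take $I=\ker(\phi)$. Here I must first confirm that $\ker(\phi)$ is a \emph{closed} ideal, so that $\mathcal{A}/\ker(\phi)$ is a genuine $C^*$-algebra to which the construction applies: it is self-adjoint and absorbs products directly from the axioms, and it is closed because any $\ast$-homomorphism between $C^*$-algebras is contractive (as in Proposition \ref{prop_homomorfismo_lipstchitz}), whence $\ker(\phi)=\phi^{-1}(\{0\})$ is the preimage of a closed set. The induced map $\psi$ is then injective, since $\psi(a+\ker(\phi))=\phi(a)=0$ forces $a\in\ker(\phi)$, i.e. $a+\ker(\phi)$ is the zero class; and when $\phi$ is surjective the image of $\psi$ equals $\phi(\mathcal{A})=\mathcal{B}$, so $\psi$ is a bijective $\ast$-homomorphism, that is, a $\ast$-isomorphism $\mathcal{A}/\ker(\phi)\simeq\mathcal{B}$.

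The hard part here is not analytic — this is essentially a universal-property diagram chase, and each verification is routine. The points deserving genuine care are ensuring that well-definedness uses only the inclusion $I\subseteq\ker(\phi)$ (not equality) and, for the concluding statement, checking that $\ker(\phi)$ is closed so that the quotient $C^*$-algebra is even defined before one can speak of the induced isomorphism.
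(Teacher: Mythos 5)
Your proposal is correct and follows essentially the same route as the paper's proof: define $\psi(a+I)=\phi(a)$, check well-definedness using only $I\subseteq\ker(\phi)$, verify the factorization and the $\ast$-homomorphism properties, and obtain injectivity when $I=\ker(\phi)$. You are in fact more thorough than the paper, which omits both the uniqueness argument and the observation that $\ker(\phi)$ must be closed for $\mathcal{A}/\ker(\phi)$ to be a $C^*$-algebra; your additions are sound, with the minor caveat that Proposition \ref{prop_homomorfismo_lipstchitz} as stated covers only $\mathbb{C}$-valued homomorphisms, so the contractivity of a general $\ast$-homomorphism needs the standard spectral-radius argument rather than a direct citation.
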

\begin{proof}
Indeed, consider $\psi:\mathcal{A}/I\rightarrow\mathcal{B}$ defined by $\psi(a+I)=\phi(a)$. This is well-defined: if $a+I=b+I$, then $a-b\in I$, and since $I\subseteq \ker(\phi)$, we have $\phi(a-b)=0$, i.e., $\psi(a+I)=\phi(a)=\phi(b)=\psi(b+I)$. Clearly, by its definition, $\psi$ is a $\ast$-homomorphism and $\psi\circ\pi=\phi$.

Now, for the particular case, we need to prove that in the case of $\psi:\mathcal{A}/\ker(\phi)\rightarrow\mathcal{B}$, it is injective. Indeed, if $\psi(a+\ker(\phi))=0$, it follows from the definition of $\psi$ that $\phi(a)=0$, i.e., $a\in\ker(\phi)$, which means $a+\ker(\phi)=0+\ker(\phi)$. This completes the proof.
\end{proof}

The following theorem demonstrates the important relationship between the spectrum of a $C^{\ast}$-algebra and the spectrum of an element in that $C^{\ast}$-algebra.

\begin{theorem} \label{theorem_espectros_relacionados}
For every $a \in \mathcal{A}$, it holds that
\begin{equation*}
\sigma(a) = \{\phi(a)|\,\phi \in \widehat{\mathcal{A}}\}
\end{equation*}
\end{theorem}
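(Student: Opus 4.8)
The plan is to prove the two set inclusions separately. The inclusion $\{\phi(a) : \phi \in \widehat{\mathcal{A}}\} \subseteq \sigma(a)$ is essentially already contained in the proof of Proposition \ref{prop_homomorfismo_lipstchitz}: given $\phi \in \widehat{\mathcal{A}}$, the element $a - \phi(a)e$ lies in $\ker(\phi)$, which is a proper ideal and hence disjoint from $G(\mathcal{A})$; thus $\phi(a)e - a$ is non-invertible, i.e. $\phi(a) \in \sigma(a)$. I would dispose of this direction in a line or two.

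The substantial direction is $\sigma(a) \subseteq \{\phi(a) : \phi \in \widehat{\mathcal{A}}\}$. Fix $\lambda \in \sigma(a)$ and set $x := \lambda e - a$, which by definition is non-invertible. The first step is to produce a \emph{self-adjoint} proper ideal containing the information of $x$. Since a principal ideal generated by $x$ need not be self-adjoint (and the paper's ideals are required to be), I would instead work with $x^*x$: a short computation using commutativity shows that $x$ non-invertible forces $x^*x$ non-invertible, while the principal ideal $\mathcal{A}(x^*x)$ is self-adjoint (because $(x^*x)^* = x^*x$ and $\mathcal{A}$ is commutative) and proper (since $e \in \mathcal{A}(x^*x)$ would make $x^*x$ invertible).

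The decisive step is the passage to a maximal ideal. By Zorn's Lemma (as recalled before Proposition \ref{prop_quot}), the proper ideal $\mathcal{A}(x^*x)$ is contained in a maximal ideal $J$, which is closed, so that the quotient $\mathcal{A}/J$ is a unital commutative $C^*$-algebra. Maximality of $J$ forces $\mathcal{A}/J$ to have no nonzero proper ideal, hence every nonzero element is invertible (here the $C^*$-identity is used to see $\bar y \bar y^* \neq 0$ whenever $\bar y \neq 0$, which keeps the self-adjoint-ideal bookkeeping intact). Gelfand--Mazur (Theorem \ref{GMazur}) therefore gives an isometric $\ast$-isomorphism $\mathcal{A}/J \cong \mathbb{C}$, and composing with the canonical projection yields a character $\phi \in \widehat{\mathcal{A}}$ with $\ker(\phi) = J$.

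Finally I would extract the value. Since $x^*x \in J = \ker(\phi)$, we get $0 = \phi(x^*x) = \overline{\phi(x)}\,\phi(x) = |\phi(x)|^2$, so $\phi(x) = 0$; that is, $\phi(\lambda e - a) = \lambda - \phi(a) = 0$, giving $\phi(a) = \lambda$ as required. The main obstacle is this reverse inclusion, and within it the passage from the non-invertible element $x$ to an honest character: the self-adjointness constraint on ideals makes the choice of $x^*x$ (rather than $x$) the key technical point, while the identification $\mathcal{A}/J \cong \mathbb{C}$ rests entirely on Gelfand--Mazur, which is where the analytic content (non-emptiness of the spectrum, Lemma \ref{espectronovacio}) silently enters.
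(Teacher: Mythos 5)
Your proof is correct, and its skeleton is the same as the paper's: the easy inclusion $\{\phi(a)\} \subseteq \sigma(a)$ via the fact that $\ker(\phi)$ is a proper ideal, and the hard inclusion by placing a principal ideal built from $x = \lambda e - a$ inside a maximal ideal $J$, identifying $\mathcal{A}/J \cong \mathbb{C}$ via Gelfand--Mazur (Theorem \ref{GMazur}), and reading off the character as the quotient projection. Where you genuinely diverge is the choice of generator, and this is not cosmetic: the paper takes $J_0 = (\lambda e - a)\mathcal{A}$ directly, whereas you generate with $x^*x$. Under the paper's own definition, an ideal is required to be a \emph{self-adjoint} subspace (this is precisely what makes the quotient $\mathcal{A}/I$ a $\ast$-algebra and the projection a $\ast$-homomorphism), and $(\lambda e - a)\mathcal{A}$ is not obviously self-adjoint; neither is the auxiliary set $\mathcal{J} = \{j + ab\}$ used in the paper's argument that every nonzero class of $\mathcal{A}/J$ is invertible. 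Your two substitutions repair exactly this bookkeeping: $x^*x$ is self-adjoint and, by commutativity, still non-invertible with $\mathcal{A}(x^*x)$ proper and self-adjoint; and inside the quotient, replacing a nonzero class $\bar y$ by $\bar y \bar y^* \neq 0$ (nonzero by the $C^*$-identity) lets the maximality argument run entirely with self-adjoint ideals. The only price is the one-line extra step $0 = \phi(x^*x) = |\phi(x)|^2$, hence $\phi(x) = 0$ and $\lambda = \phi(a)$. In short, your write-up is the same argument as the paper's but is more faithful to the paper's own definition of ideal; strictly read, the paper's proof leaves the self-adjointness of $J_0$ and of $\mathcal{J}$ unaddressed, and your version closes that gap.
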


\begin{proof}
Let $ \lambda \in \sigma(a) $ be given. We define $ J_0 $ as
\begin{equation*}
J_0 = (\lambda e - a) \mathcal{A} := \{(\lambda e - a)b|\,b \in \mathcal{A}\}.
\end{equation*}
$ J_0 $ is an ideal of $ \mathcal{A} $. Since $ \lambda e - a $ is not invertible, $ J_0 $ is a proper ideal of $ \mathcal{A} $, and thus there exists a closed maximal ideal $J$ containing $J_0$. Let $\mathcal{B} = \mathcal{A} / J $ be the quotient algebra. As we have seen before, $\mathcal{B}$ is a $C^{\ast}-$algebra.

Let $[b]$ be a nonzero class in $\mathcal{B}$. The subset $\mathcal{J}:=\{j+ab|,j \in J,,a\in \mathcal{A}\}$ of $\mathcal{A}$ is an ideal containing $J$. The maximality of $J$ implies that $\mathcal{J}=\mathcal{A}$. Since $\mathcal{A}$ has a unit $e$, there exist $j$ and $a$ such that $e=j+ab$. Thus, the element $e-ab=j$ belongs to $J$, which in the quotient $\mathcal{A}/J$ means that $[e]=[ab]=[a][b]$, and therefore the class $[b]$ is invertible. Applying the Gelfand-Mazur Theorem (Theorem \ref{GMazur}), the $C^{\ast}-$algebra $\mathcal{A}/J$ can be identified with the complex numbers $\mathbb{C}$.

Thus, the quotient map
\begin{equation*}
\pi: \mathcal{A} \to \mathcal{A}/J = \mathbb{C}
\end{equation*}
is a $\ast$-homomorphism (and hence an element of $\widehat{\mathcal{A}}$) with kernel $J$. Moreover, since $ \lambda e - a \in J = \ker(\pi) $, it follows that $ \pi(\lambda e - a) = 0 $, or in other words, $ \lambda = \pi(a) $. This shows that $ \sigma(a) \subseteq \{\phi(a)|,\phi \in \widehat{\mathcal{A}}\} $.

We have previously shown that $ (\phi(a)1 - a) \in \ker(\phi) $. Since the kernel of a non-zero $\ast$-homomorphism is a proper ideal, it follows that $ \phi(a)1 - a$ is non-invertible, i.e., $ \{\phi(a)|,\phi \in \widehat{\mathcal{A}}\} \subseteq \sigma(a) $. This establishes the equality $ \sigma(a) = \{\phi(a)|,\phi \in \widehat{\mathcal{A}}\} $, completing the proof.
\end{proof}

\end{section}
% %%%%%%%%%%%%%%%%%%%%%%%%%%%%%%%%%%%%%%%%%%%%%%%%%%

\begin{section}{\texorpdfstring{The Gelfand Transform}{The Gelfand Transform}}\label{TransformadaG}

In this section, we introduce the central tool for proving the Gelfand-Naimark Theorem, known as the Gelfand Transform. The Gelfand Transform is an operator that assigns to each element of a $C^{\ast}$-algebra a continuous function defined on a suitable compact space. Its importance lies in the fact that this operator acts isometrically and isomorphically on each $C^{\ast}$-algebra, leading to a functorial assignment, as we will see later.

One of the examples of $C^{\ast}$-algebras that we will consider in this section is the algebra of continuous functions, denoted by $C(X)$, defined on a Hausdorff-compact space $X$ (see Example \ref{defC(X)}). In particular, according to Theorem \ref{espectroCompacAlg}, the set $C(\widehat{\mathcal{A}})$ is well-defined.

\begin{definition}\label{transGelfand}
The \emph{Gelfand Transform of} $\mathcal{A}$ is the function
$\kappa:\mathcal{A} \to C(\widehat{\mathcal{A}})$ defined as follows:
\begin{align*}
\kappa: \mathcal{A} & \to C(\widehat{\mathcal{A}})  \\
a &\mapsto\!
\begin{aligned}[t]
\widehat{a}: \widehat{\mathcal{A}} & \to \mathbb{C} \\
\phi & \mapsto \widehat{a}(\phi) = \phi(a).
\end{aligned}
\end{align*}
$\kappa$ is an $^*$-homomorphism

\end{definition}

\begin{lemma} \label{transformada_norma}
For every $a \in \mathcal{A}$, we have
\begin{equation*}
\| \kappa(a) \|_{\infty} = \|  \hat{a} \|_{\infty}= r(a) \leq \| a \|.
\end{equation*}
\end{lemma}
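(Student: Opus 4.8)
The plan is to reduce everything to the identification of the value set of the Gelfand transform with the spectrum, which has already been established. The claimed chain of (in)equalities has four terms, and three of the links are essentially definitional once the right earlier result is invoked, so the proof should be short.

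First I would observe that the leftmost equality $\|\kappa(a)\|_\infty = \|\hat{a}\|_\infty$ is nothing but the definition of $\kappa$ from Definition \ref{transGelfand}, since $\kappa(a) = \hat{a}$. The function $\hat{a}$ is a genuine element of $C(\widehat{\mathcal{A}})$ (it is weak$^\ast$ continuous, being an evaluation functional restricted to the compact space $\widehat{\mathcal{A}}$ from Theorem \ref{espectroCompacAlg}), so its supremum norm is well defined and is attained as
\begin{equation*}
\|\hat{a}\|_\infty = \sup_{\phi \in \widehat{\mathcal{A}}} |\hat{a}(\phi)| = \sup_{\phi \in \widehat{\mathcal{A}}} |\phi(a)|.
\end{equation*}

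Next I would bring in the key input, Theorem \ref{theorem_espectros_relacionados}, which states precisely that $\sigma(a) = \{\phi(a) \mid \phi \in \widehat{\mathcal{A}}\}$. Substituting this identity of sets into the supremum above, the supremum of $|\phi(a)|$ over all characters $\phi$ becomes the supremum of $|\lambda|$ over all $\lambda \in \sigma(a)$, which is exactly the definition of the spectral radius:
\begin{equation*}
\sup_{\phi \in \widehat{\mathcal{A}}} |\phi(a)| = \sup_{\lambda \in \sigma(a)} |\lambda| = r(a).
\end{equation*}
This gives the middle equality $\|\hat{a}\|_\infty = r(a)$.

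Finally, the inequality $r(a) \leq \|a\|$ is the bound recorded in the remark following Theorem \ref{radius_formula_theorem}; alternatively it follows directly from the spectral radius formula $r(a) = \lim_{n\to\infty} \|a^n\|^{1/n}$ together with submultiplicativity $\|a^n\| \leq \|a\|^n$, which yields $\|a^n\|^{1/n} \leq \|a\|$ for every $n$ and hence $r(a) \leq \|a\|$ in the limit. I do not anticipate a genuine obstacle here: the entire content of the lemma is packaged inside Theorem \ref{theorem_espectros_relacionados}, and the only point requiring a moment's care is confirming that $\hat{a}$ lies in $C(\widehat{\mathcal{A}})$ so that $\|\hat{a}\|_\infty$ makes sense, which is guaranteed by the compactness of $\widehat{\mathcal{A}}$ in the weak$^\ast$ topology.
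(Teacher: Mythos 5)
Your proof is correct and follows essentially the same route as the paper: both reduce the lemma to Theorem \ref{theorem_espectros_relacionados}, identifying $\{\phi(a) : \phi \in \widehat{\mathcal{A}}\}$ with $\sigma(a)$ so that the supremum norm of $\hat{a}$ becomes the spectral radius by definition. Your supplementary remarks---checking that $\hat{a} \in C(\widehat{\mathcal{A}})$ and justifying $r(a) \leq \|a\|$ via the remark after Theorem \ref{radius_formula_theorem} or via submultiplicativity---are correct and simply make explicit details the paper leaves implicit.
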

\begin{proof}
For any $a \in \mathcal{A}$, we have:
\begin{eqnarray*}
\|  \hat{a} \|_{\infty} & = &\sup\{|\hat{a}(\phi)|: \phi \in \widehat{\mathcal{A}}\} \\
&=&\sup\{|\phi(a)|: \phi \in \widehat{\mathcal{A}} \} \\
&=& \sup\{|\lambda|: \lambda \in \sigma(a) \} \\
&=& r(a),
\end{eqnarray*}
where the third equality is guaranteed by Theorem \ref{theorem_espectros_relacionados}.
\end{proof}

\begin{theorem}[Gelfand]\label{gelfand_theorem}
Let $ \mathcal{A} $ be a commutative $C^{\ast}-$algebra with unity. The Gelfand Transform
$ \kappa: \mathcal{A} \to C(\widehat{\mathcal{A}}) $ is an isometric $*$-isomorphism.
\end{theorem}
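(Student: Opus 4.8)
The plan is to establish the four properties that together make $\kappa$ an isometric $*$-isomorphism: that it is a $*$-homomorphism (already recorded in Definition \ref{transGelfand}), that it is isometric, that it is injective, and that it is surjective. The isometry is the analytic heart of the argument, and surjectivity via Stone--Weierstrass is the step I expect to require the most care.

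First I would upgrade the inequality $\|\kappa(a)\|_{\infty} = r(a) \le \|a\|$ from Lemma \ref{transformada_norma} to an equality by exploiting the $C^*$-condition, reducing to self-adjoint elements where Theorem \ref{radius_formula_theorem} already gives $r(a) = \|a\|$. For arbitrary $a \in \mathcal{A}$, the element $a^*a$ is self-adjoint; since $\kappa$ is a $*$-homomorphism and $C(\widehat{\mathcal{A}})$ satisfies $\|f\|_{\infty}^2 = \|\overline{f}\,f\|_{\infty}$, I would compute
\begin{equation*}
\|\kappa(a)\|_{\infty}^2 = \|\overline{\hat a}\,\hat a\|_{\infty} = \|\widehat{a^*a}\|_{\infty} = r(a^*a) = \|a^*a\| = \|a\|^2,
\end{equation*}
using Lemma \ref{transformada_norma} for the third equality and Theorem \ref{radius_formula_theorem} applied to the self-adjoint element $a^*a$ for the fourth. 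Hence $\|\kappa(a)\|_{\infty} = \|a\|$, so $\kappa$ is isometric; in particular $\kappa(a) = 0$ forces $a = 0$, which gives injectivity.

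It then remains to prove surjectivity, for which I would appeal to the complex Stone--Weierstrass theorem on the compact Hausdorff space $\widehat{\mathcal{A}}$ (compact by Theorem \ref{espectroCompacAlg}). The image $\kappa(\mathcal{A})$ is a subalgebra of $C(\widehat{\mathcal{A}})$ because $\kappa$ is linear and multiplicative; it contains the constant function $1 = \hat e$; it is closed under complex conjugation since $\overline{\hat a} = \widehat{a^*}$, owing to $\phi(a^*) = \overline{\phi(a)}$; and it separates points, because two distinct homomorphisms $\phi_1 \ne \phi_2$ must differ on some $a$, whence $\hat a(\phi_1) \ne \hat a(\phi_2)$. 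Stone--Weierstrass then yields that $\kappa(\mathcal{A})$ is dense in $C(\widehat{\mathcal{A}})$.

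Finally I would close the density argument using completeness: because $\kappa$ is isometric and $\mathcal{A}$ is a Banach algebra, the image $\kappa(\mathcal{A})$ is complete, hence closed in $C(\widehat{\mathcal{A}})$, and a dense closed subspace is the whole space, so $\kappa(\mathcal{A}) = C(\widehat{\mathcal{A}})$. Combined with injectivity and the isometry property, this shows $\kappa$ is an isometric $*$-isomorphism. The main obstacle I anticipate is organising the surjectivity step cleanly: verifying each Stone--Weierstrass hypothesis and, crucially, remembering that density alone is insufficient — it is the completeness-plus-isometry observation that upgrades the dense image to the full algebra.
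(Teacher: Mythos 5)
Your proof is correct and follows essentially the same route as the paper: isometry via the $C^{\ast}$-identity applied to the self-adjoint element $a^*a$ (combining Lemma \ref{transformada_norma} with Theorem \ref{radius_formula_theorem}), and surjectivity via the Stone--Weierstrass theorem applied to the unital, self-adjoint, point-separating subalgebra $\kappa(\mathcal{A})$. If anything, you are more careful than the paper: where the paper passes directly from Stone--Weierstrass to the conclusion that $\kappa$ is surjective, you make explicit the missing final step --- that the isometric image of the complete space $\mathcal{A}$ is closed in $C(\widehat{\mathcal{A}})$, so density plus closedness gives the whole algebra --- as well as the observation that isometry yields injectivity.
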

\begin{proof}
For each $ a \in \mathcal{A} $, due to the fact that $ a^*a $ is self-adjoint, we have
that the spectral radius coincides with the norm of the element (Theorem \ref{radius_formula_theorem}),
and therefore
\begin{eqnarray}
\| a \|^2 &=&\| a^*a \| = r(a^*a)\\ &=& \| \kappa(a^*a) \|_\infty =
\| \overline{\kappa(a)}\kappa(a) \|_\infty\\& =& \| \kappa(a) \|_\infty ^2 
\end{eqnarray}

showing that $\kappa$ is an isometry.

On the other hand, let $ \phi, \psi \in \widehat{\mathcal{A}} $ such that $ \phi \neq \psi $.
This implies that $ \phi(a) \neq \psi(a)$ for some $ a \in \mathcal{A} $. Thus,
$ \kappa(a)(\phi) = \phi(a) \neq \psi(a) = \kappa(a)(\psi) $, i.e., $\kappa(\mathcal{A})$ is a
self-adjoint subalgebra of $C(\widehat{\mathcal{A}})$ with unity and separates points.
Applying the Stone-Weierstrass Theorem \cite[Thm. 8.1, p. 145]{C2}, we conclude that $ \kappa $
is surjective.
\end{proof}

% %%%%%%%%%%%%%%%%%%%%%%%%%%%%%%%%%%%%%%%%

The main result of this section will pave the way to establish the equivalence between categories that leads to the Gelfand-Naimark Theorem. Specifically, we will present the construction of a $C^*$-algebra from a locally compact topological space.

\begin{theorem} \label{homeomorphism_gn_spaces}
Let $ X $ be a compact Hausdorff space. Then
\begin{align*}
\alpha: X & \to \widehat{C(X)} \\
x &\mapsto\!
\begin{aligned}[t]
\alpha(x) = e_x : C(X) & \to \mathbb{C} \\
f & \mapsto e_x(f) = f(x)
\end{aligned}
\end{align*}
is a surjective homeomorphism between topological spaces.
\end{theorem}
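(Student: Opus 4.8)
The plan is to verify that $\alpha$ is a well-defined continuous bijection and then invoke the standard topological fact that a continuous bijection from a compact space to a Hausdorff space is automatically a homeomorphism. First I would check that $\alpha$ lands in $\widehat{C(X)}$: for a fixed $x \in X$ the evaluation $e_x$ is linear, multiplicative since $e_x(fg) = f(x)g(x) = e_x(f)e_x(g)$, compatible with the involution since $e_x(f^*) = \overline{f(x)} = \overline{e_x(f)}$, and nonzero because $e_x(1) = 1$; hence $e_x \in \widehat{C(X)}$. Continuity of $\alpha$ then follows directly from the definition of the weak$^\ast$ topology recalled before Theorem \ref{espectroCompacAlg}: if $x_\lambda \to x$ in $X$, then for every $f \in C(X)$ the continuity of $f$ gives $e_{x_\lambda}(f) = f(x_\lambda) \to f(x) = e_x(f)$, which is precisely convergence $\alpha(x_\lambda) \to \alpha(x)$ in the weak$^\ast$ topology.

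For injectivity I would use that $X$, being compact Hausdorff, is normal, so Urysohn's Lemma supplies for any pair $x \neq y$ a continuous function $f$ with $f(x) \neq f(y)$; then $\alpha(x)(f) = f(x) \neq f(y) = \alpha(y)(f)$, so $\alpha(x) \neq \alpha(y)$.

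The heart of the argument, and the step I expect to be the main obstacle, is surjectivity. Given $\phi \in \widehat{C(X)}$, I want to produce $x$ with $\phi = e_x$, and the key is to show that the self-adjoint proper ideal $\ker(\phi)$ has a common zero in $X$. Suppose, for contradiction, that for every $x \in X$ there were some $f_x \in \ker(\phi)$ with $f_x(x) \neq 0$. Replacing $f_x$ by $|f_x|^2 = f_x f_x^* \in \ker(\phi)$ (using that $\ker(\phi)$ is an ideal and self-adjoint) yields a nonnegative function strictly positive at $x$, hence positive on an open neighborhood $U_x$. By compactness, finitely many $U_{x_1}, \dots, U_{x_n}$ cover $X$, and then $g = \sum_{i=1}^n |f_{x_i}|^2 \in \ker(\phi)$ is strictly positive everywhere, hence invertible in $C(X)$, which is impossible inside a proper ideal. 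Therefore there exists $x \in X$ with $\ker(\phi) \subseteq M_x := \{f \in C(X) : f(x) = 0\}$; since $\ker(\phi)$ is maximal (the quotient $C(X)/\ker(\phi)$ is $\mathbb{C}$, a field, by Proposition \ref{prop_quot} and Theorem \ref{GMazur}) and $M_x$ is a proper ideal, they coincide. Finally, for any $f$ the element $f - \phi(f)\cdot 1$ lies in $\ker(\phi) = M_x$, so $f(x) = \phi(f)$, that is, $\phi = e_x = \alpha(x)$.

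To conclude, $\alpha$ is a continuous bijection from the compact space $X$ onto $\widehat{C(X)}$, which is compact by Theorem \ref{espectroCompacAlg} and Hausdorff as a subspace of $\mathcal{A}^{\vee}$ in the weak$^\ast$ topology. Since a continuous bijection from a compact space onto a Hausdorff space is a homeomorphism, $\alpha$ is the desired surjective homeomorphism.
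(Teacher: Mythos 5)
Your proof is correct and follows essentially the same route as the paper's: evaluation functionals checked to be well-defined elements of $\widehat{C(X)}$, continuity from the definition of the weak$^\ast$ topology, injectivity via Urysohn's Lemma, surjectivity by using compactness to build a strictly positive function $g=\sum_{i=1}^{n}|f_{x_i}|^2$ lying in $\ker(\phi)$, and the continuous-bijection-from-compact-to-Hausdorff argument to conclude. The only cosmetic difference is the final contradiction in the surjectivity step: the paper notes that $\phi(g)=0$ forces $0\in\sigma(g)=\mathrm{Im}(g)$, while you observe that a strictly positive function is invertible and cannot lie in the proper ideal $\ker(\phi)$ --- two equivalent phrasings of the same fact.
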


\begin{proof}
To prove that $\alpha$ is a homeomorphism, we need to show that it is well-defined, continuous, injective, surjective, and has a continuous inverse.

First, let's establish that $\alpha$ is well-defined. The function $\alpha(x) = e_x$ defined on $C(X)$ is linear and multiplicative, as it corresponds to the evaluation of functions at the fixed point $x$. If we take the constant function $f(x) = 1$ for all $x \in X$, then $\alpha(x)(f) = e_x(f) = f(x) = 1 \neq 0$. Therefore, $e_x$ is non-zero. Moreover, we have $e_x(f^*) = \overline{f(x)} = e_x(f)^*$, which shows that $\alpha(x) \in \widehat{C(X)}$.

Next, we will show that $\alpha$ is continuous. Let $\{x_i\}_{i=1}^n \subseteq X$ be a convergent sequence to $x_0 \in X$. We need to demonstrate that $e_{x_i}$ converges to $e_{x_0}$ as functionals defined on $C(X)$. For any given function $f \in C(X)$, we have $e_{x_i}(f) = f(x_i)$ and $e_{x_0}(f) = f(x_0)$. Since $f$ is continuous, $f(x_i)$ tends to $f(x_0)$, and therefore $e_{x_i}(f)$ tends to $e_{x_0}(f)$.

To establish injectivity, consider $x, y \in X$ with $x \neq y$. By the Urysohn Lemma (\cite[Theorem 33.1]{M}), there exists a continuous function $f$ such that $f(x) = 0$ and $f(y) = 1$. This implies that $e_x(f) \neq e_y(f)$, and therefore $\alpha$ is injective.

To prove surjectivity, let us assume, by contradiction, that the function is not surjective. Therefore, there exists $\phi \in \widehat{C(X)}$ for which, given any arbitrary $x \in X$, there exists $f_x \in C(X)$ such that
\begin{equation}\label{eqn_gn_theorem_pos}
|\alpha(x)(f_x) -\phi(f_x)|=|f_x(x) - \phi(f_x)| > 0.
\end{equation}
Now, consider the continuous function
\begin{eqnarray*}
g_x: X &\to& \mathbb{C} \\
y &\mapsto& g_x(y) = f_x(y) - \phi(f_x).
\end{eqnarray*}
Based on the inequality \eqref{eqn_gn_theorem_pos}, we guarantee the existence of a neighborhood $V_x \subseteq X$ of $x$ such that $|g_x(y)| > 0$, for all $y \in V_x$.

Clearly, $X = \cup_{x \in X} V_x$, and the compactness of $X$ guarantees the existence of finitely many $x_1, x_2, \cdots, x_n \in X$ such that
\begin{equation*}
X = \cup_{i=1}^n V_{x_i}.
\end{equation*}
Let $g = \sum_{i=1}^{n} g_{x_i}\overline{g_{x_i}} \in C(X)$, where the image of $g$ is real. For each $y \in X$, there exists $x_i$ such that $y \in V_{x_i}$, and therefore $g_{x_i}\overline{g_{x_i}}(y) > 0$. This implies that $g(y) > 0$ for all $y \in X$.

The functional $\phi$ evaluated at $g$ can be calculated as follows:
\begin{eqnarray*}
\phi(g) &=& \phi \left(\sum_{i=1}^{n}(f_{x_i} - \phi(f_{x_i}))\overline{(f_{x_i} - \phi(f_{x_i}))}\right) \\
&=& \sum_{i=1}^{n}(\phi(f_{x_i}) - \phi(f_{x_i}))\phi(\overline{f_{x_i} - \phi(f_{x_i})}) \\
&=& 0.
\end{eqnarray*}

    By Theorem \ref{theorem_espectros_relacionados}, we have $0 \in \sigma(g) = \text{Im}(g)$, which means $g$ vanishes, which is absurd. Therefore, we conclude that $\alpha$ is surjective.

Finally, by \cite[Theorem 26.6]{M}, we conclude that $\alpha$ is a homeomorphism since it is bijective and continuous, as desired.

\end{proof}

\end{section}
% %%%%%%%%%%%%%%%%%%%%%%%%%%%%%%%%%%%%%%%%%%%%%%%%%%%
\begin{section}{\texorpdfstring{Gelfand-Naimark Theorem and Category Theory}{Gelfand-Naimark Theorem and Category Theory}}\label{TGN}

A theory in mathematics is, in informal and simplistic terms, a collection of sets endowed with certain structure and functions that preserve that structure. For example, the theory of topological spaces consists of topological spaces and continuous functions on those spaces. In this sense, Category Theory can be interpreted as a ``meta-theory" that provides the language and tools to deal with various theories in mathematics and establish connections between them through so-called ``functors". Thus, it is valid to interpret Category Theory as providing a panoramic view of mathematics, where certain theorems, when translated into the language of this theory, establish connections between different theories.

The main objective of this section is to present an example of such connections through the categorical version of the Gelfand-Naimark Theorem. As we will see, this theorem allows us to establish a connection between the theories of Hausdorff-compact spaces and the theory of commutative unital $C^{\ast}$-algebras. In this context, the theorem can be interpreted, in a rough sense, as stating that all ``geometric'' information about a space is contained in its algebra of functions, and conversely, every $C^{\ast}$-algebra is characterized by a certain space of continuous functions on a Hausdorff-compact space.

These types of global results, which link seemingly disparate contexts, bring notable benefits to the development of the involved theories. This will be demonstrated in the applications presented in the next section, where, for example, we will characterize the spectrum of self-adjoint, unitary, normal, and positive elements. Although this is not an elementary task in an arbitrary $C^{\ast}$-algebra, it becomes much simpler in light of the Gelfand-Naimark theorem in its categorical version. Moreover, such global approaches have facilitated the construction of new, fruitful theories that would have otherwise been difficult to develop, such as noncommutative geometry, a topic that we will not delve into in this article but encourage curious readers to explore in references \cite{C} and \cite{V}.

Before presenting the theorem, we will give a brief introduction to the language and tools of category theory. For this introductory part, we suggest references \cite{Aw} and \cite{Mc}.

\begin{definition}[Category]
A category $\mathcal{C}$ is formed by the following conditions:
\begin{itemize}
\item A class of \emph{objects}, denoted by $\text{Ob}(\mathcal{C})$.
\item A set of \emph{morphisms}, consisting of the sets $\text{Mor}_{\mathcal{C}}(A,B)$ for each $A,B \in \text{Ob}(\mathcal{C})$, which satisfy the following: On one hand, for each $A \in \text{Ob}(\mathcal{C})$, there exists $1_A \in \text{Mor}_{\mathcal{C}}(A,A)$. On the other hand, there is a function $\circ: \text{Mor}_{\mathcal{C}}(A,B) \times \text{Mor}_{\mathcal{C}}(B,C) \to \text{Mor}_{\mathcal{C}}(A,C)$, $(f,g) \mapsto g \circ f$, for any $A,B,C \in \text{Ob}(\mathcal{C})$, which satisfies the relation $g \circ (f \circ h) = (g \circ f) \circ h$ and $f \circ 1_A = f = 1_B \circ f$ for each $f \in \text{Mor}_{\mathcal{C}}(A,B)$.
\end{itemize}
\end{definition}

Some of the main examples of categories are:
\begin{itemize}
\item The category of sets, denoted by \textbf{Set}: whose objects are sets and morphisms are functions between these sets.
\item The category of topological spaces, denoted by \textbf{Top}: whose objects are topological spaces and morphisms are continuous functions between these spaces.
\item The previous example is the standard example of a category, i.e., collections of sets with some ``structure'' and morphisms being functions that ``preserve this structure''. In this sense, we have a wealth of examples that satisfy these characteristics: groups and group homomorphisms, rings and ring homomorphisms, vector spaces over a certain field and linear transformations between them, smooth manifolds and smooth functions, and so on. These examples belong to the categories that we will define in this section.

\item Within the morphisms of a category $\mathcal{C}$, there are some that deserve special attention, namely \emph{isomorphisms}. An \emph{isomorphism} $f: A \to B$ is a morphism for which there exists another morphism $g: B \to A$ such that $f \circ g = 1_B$ and $g \circ f = 1_A$. In this case, the objects $A$ and $B$ of $\mathcal{C}$ are called \emph{isomorphic}. Note that the relation ``being isomorphic to" defines an equivalence relation on the objects of $\mathcal{C}$.
\item For any category $\mathcal{C}$, we define the \emph{opposite category}, denoted by $\mathcal{C}^{\text{op}}$, as the category with the same objects as $\mathcal{C}$ and morphisms from $\mathcal{C}$ with the ``inverted arrows,'' i.e., $\text{Mor}_{\mathcal{C}^{\text{op}}}(A,B) := \text{Mor}_{\mathcal{C}}(B,A)$ for each $A,B \in \text{Ob}(\mathcal{C})$. Note that in this case, for any pair of morphisms $f$ and $g$ in $\mathcal{C}^{\text{op}}$, the composition ``$\circ_{\mathcal{C}^{\text{op}}}$" is naturally defined by $g \circ_{\mathcal{C}^{\text{op}}} f := f \circ_{\mathcal{C}} g$.
\end{itemize}

The categories that will be related through the categorical version of the Gelfand-Naimark Theorem, which we will study in this section, are as follows:
\begin{itemize}
\item The category of Hausdorff-Compact spaces, denoted by $\mathcal{HC}$: The objects are Hausdorff-Compact topological spaces. The morphisms are continuous functions between these spaces.
\item The category of commutative unital $C^*$-algebras, denoted by $\mathcal{CAU}$: The objects are commutative unital $C^*$-algebras. The morphisms are $\ast$-homomorphisms $\phi: \mathcal{A} \rightarrow \mathcal{B}$ such that $\phi(e_{\mathcal{A}}) = e_{\mathcal{B}}$.
\end{itemize}

\begin{remark}
In the categories of our interest, $\mathcal{HC}$ and $\mathcal{CAU}$, the isomorphisms are, respectively, homeomorphisms and isometric $\ast$-isomorphisms that preserve the unit.
\end{remark}

Now, it is natural to consider ``morphisms" that preserve the category structure. This allows us to define the concept of a functor.

\begin{definition}\label{def-functor}
Let $\mathcal{C}$ and $\mathcal{D}$ be two categories. A \emph{covariant functor} (resp. \emph{contravariant functor}) $F: \mathcal{C} \rightarrow \mathcal{D}$ is determined by the following attributes:
\begin{itemize}
\item $F(A) \in \text{Ob}(\mathcal{D})$ for every $A \in \text{Ob}(\mathcal{C})$.
\item If $f: A \rightarrow B$ is a morphism in $\mathcal{C}$, then $F(f): F(A) \rightarrow F(B)$ (resp. $F(f): F(B) \rightarrow F(A)$) is a morphism in $\mathcal{D}$.
\item $F(1_A) = 1_{F(A)}$ for every $A \in \text{Ob}(\mathcal{C})$.
\item $F(f \circ g) = F(f) \circ F(g)$ (resp. $F(f \circ g) = F(g) \circ F(f)$) for every pair of morphisms $f$ and $g$ in $\mathcal{C}$.
\end{itemize}
\end{definition}

Naturally, we define the identity functor $Id_\mathcal{C}: \mathcal{C} \rightarrow \mathcal{C}$ and the composition of functors $G \circ F: \mathcal{C} \rightarrow \mathcal{E}$ for functors $F: \mathcal{C} \rightarrow \mathcal{D}$ and $G: \mathcal{D} \rightarrow \mathcal{E}$. We say that a functor $F: \mathcal{C} \rightarrow \mathcal{D}$ is an isomorphism if there exists a functor $G: \mathcal{D} \rightarrow \mathcal{C}$ such that $F \circ G = Id_{\mathcal{D}}$ and $G \circ F = Id_{\mathcal{C}}$. In this case, the categories $\mathcal{C}$ and $\mathcal{D}$ are called isomorphic.

However, in some cases, the condition that two categories are isomorphic can be difficult to achieve. Therefore, it is necessary to introduce weaker conditions to ``compare" two categories. To do so, we first introduce a natural way to ``compare" functors:

\begin{definition}
Let $F, G: \mathcal{C} \rightarrow \mathcal{D}$ be two covariant functors between the categories $\mathcal{C}$ and $\mathcal{D}$. A \emph{natural transformation} $\mu: F \rightarrow G$ between the functors $F$ and $G$ is a collection of morphisms $\mu(A): F(A) \rightarrow G(A)$, for each $A \in \text{Ob}(\mathcal{C})$, such that for every morphism $\psi: A \rightarrow B$ in $\mathcal{C}$, the following diagram
\begin{equation*}
\xymatrix{
F(A)\ar[r]^{\mu(A)}\ar[d]_{F(\psi)}& G(A)\ar[d]^{G(\psi)} \\
F(B)\ar[r]_{\mu(B)}                & G(B) 
}
\end{equation*}
commutes.
\end{definition}

Analogous definitions hold for contravariant functors $F$ and $G$ and their possible combinations. We say that $F$ and $G$ are isomorphic if there exists a natural transformation $\mu: F \rightarrow G$ that is an isomorphism, i.e., $\mu(A): F(A) \rightarrow G(A)$ is an isomorphism for each $A \in \text{Ob}(\mathcal{C})$. In this case, we use the notation $F \simeq G$.

Finally, we will define what we mean by \emph{equivalent categories}. This notion of equivalence is inspired by the homotopy equivalence of two topological spaces. Indeed, we say that two categories $\mathcal{C}$ and $\mathcal{D}$ are \emph{equivalent} if there exists a functor $F: \mathcal{C} \rightarrow \mathcal{D}$ that has a \emph{pseudo-inverse}, i.e., there exists a functor $G: \mathcal{D} \rightarrow \mathcal{C}$ such that $F \circ G \simeq 1_{\mathcal{D}}$ and $G \circ F \simeq 1_{\mathcal{C}}$.

As is easy to see, any pair of isomorphic categories are equivalent. However, in general, equivalent categories do not have to be isomorphic. The richness of this notion of equivalence, in practice, lies in the fact that it allows us to ``translate" apparently disparate properties between categories that, at first glance, seem to have no connection.

\begin{theorem}[Gelfand--Naimark (Categorical Version)]\label{GGN}
The categories $\mathcal{HC}^{\text{op}}$ and $\mathcal{CAU}$ are equivalent. In particular, for every commutative $C^{\ast}-$algebra $\mathcal{A}$ with unit, there exists a compact Hausdorff space $X$ such that $\mathcal{A}$ is isometrically $\ast$-isomorphic to $C(X)$.
\end{theorem}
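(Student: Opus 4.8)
The plan is to prove the equivalence by exhibiting an explicit pseudo-inverse pair of functors between $\mathcal{HC}^{\text{op}}$ and $\mathcal{CAU}$, together with the two natural isomorphisms that witness it; the hard analytic content has already been isolated in Section \ref{TransformadaG}, so what remains is to organize it functorially. First I would define a functor $\mathcal{F}\colon \mathcal{HC}^{\text{op}} \to \mathcal{CAU}$ by $X \mapsto C(X)$ on objects (a commutative unital $C^*$-algebra by Example \ref{defC(X)}), and on a morphism $X \to Y$ of $\mathcal{HC}^{\text{op}}$, i.e. a continuous map $f\colon Y \to X$, by the pullback $\mathcal{F}(f)=C(f)\colon C(X) \to C(Y)$, $g \mapsto g \circ f$. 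In the other direction I would define $\mathcal{G}\colon \mathcal{CAU} \to \mathcal{HC}^{\text{op}}$ by $\mathcal{A} \mapsto \widehat{\mathcal{A}}$ on objects (Hausdorff-compact by Theorem \ref{espectroCompacAlg}, the Hausdorff property coming from the fact that the weak$^\ast$ topology separates points of $\mathcal{A}^{\vee}$), and on a morphism $\phi\colon \mathcal{A}\to\mathcal{B}$ of $\mathcal{CAU}$ by precomposition $\mathcal{G}(\phi)=\widehat{\phi}\colon \widehat{\mathcal{B}} \to \widehat{\mathcal{A}}$, $\psi \mapsto \psi \circ \phi$.

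Second, I would check that these assignments are genuine functors. For $\mathcal{F}$, one verifies that $C(f)$ is linear, multiplicative, $\ast$-preserving and unit-preserving, which is immediate from the pointwise definition of the operations on $C(X)$. For $\mathcal{G}$, the only nonformal point is that $\widehat{\phi}$ is weak$^\ast$-continuous: this holds because for each fixed $a \in \mathcal{A}$ the map $\psi \mapsto (\psi\circ\phi)(a)=\psi(\phi(a))$ is a weak$^\ast$-continuous evaluation, and the weak$^\ast$ topology on $\widehat{\mathcal{A}}$ is precisely the initial topology for such evaluations; moreover $\psi\circ\phi$ is again a $\ast$-homomorphism and is unital, hence nonzero, so it indeed lands in $\widehat{\mathcal{A}}$. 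Compatibility with composition and identities (which amounts to the contravariant functoriality of the pullback and of precomposition) is then a routine formal check.

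Third, I would produce the two natural isomorphisms. On the $\mathcal{CAU}$ side, the component at $\mathcal{A}$ is the Gelfand transform $\kappa_{\mathcal{A}}\colon \mathcal{A} \to C(\widehat{\mathcal{A}}) = (\mathcal{F}\circ\mathcal{G})(\mathcal{A})$, an isometric $\ast$-isomorphism and hence an isomorphism in $\mathcal{CAU}$ by Theorem \ref{gelfand_theorem}. On the $\mathcal{HC}^{\text{op}}$ side, the component at $X$ is the evaluation map $\alpha_X\colon X \to \widehat{C(X)} = (\mathcal{G}\circ\mathcal{F})(X)$, a homeomorphism by Theorem \ref{homeomorphism_gn_spaces} and hence an isomorphism in $\mathcal{HC}^{\text{op}}$. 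The main (though entirely mechanical) obstacle is verifying naturality: for a $\ast$-homomorphism $\phi\colon\mathcal{A}\to\mathcal{B}$ one evaluates the two composites of the $\kappa$-square at an arbitrary $a\in\mathcal{A}$ and $\psi\in\widehat{\mathcal{B}}$ and finds both equal to $\psi(\phi(a))$, so the square commutes on the nose; the corresponding computation for $\alpha$ and a continuous map $f$ is handled by unwinding the definitions of $\alpha$ and $\widehat{C(f)}$. This yields $\mathcal{F}\circ\mathcal{G}\simeq \mathrm{Id}_{\mathcal{CAU}}$ and $\mathcal{G}\circ\mathcal{F}\simeq \mathrm{Id}_{\mathcal{HC}^{\text{op}}}$, which is exactly the asserted equivalence. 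Finally, the ``in particular'' clause requires no new argument: taking $X=\widehat{\mathcal{A}}$, which is compact Hausdorff, Theorem \ref{gelfand_theorem} directly furnishes the isometric $\ast$-isomorphism $\mathcal{A}\cong C(\widehat{\mathcal{A}})$.
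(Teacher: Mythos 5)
Your proposal is correct and follows essentially the same route as the paper: the same pair of functors $X \mapsto C(X)$ and $\mathcal{A} \mapsto \widehat{\mathcal{A}}$ (with pullback and precomposition on morphisms), with the Gelfand transform (Theorem \ref{gelfand_theorem}) and the evaluation map (Theorem \ref{homeomorphism_gn_spaces}) as the components of the two natural isomorphisms, verified by the same definition-unwinding computations. If anything, you are slightly more careful than the paper, since you explicitly record the weak$^\ast$-continuity of the induced map $\widehat{\phi}$ and that $\psi \circ \phi$ is nonzero because it is unital, points the paper treats as routine.
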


\begin{proof}

In light of the previous definitions and observations, the proof of this theorem will follow the following steps:

\begin{itemize}
\item {\bf Step 1:} We define $F$, a covariant functor from $\mathcal{CAU}$ to the category $\mathcal{HC}^{\text{op}}$.
\item {\bf Step 2:} We define $G$, a covariant functor from the category $\mathcal{HC}^{\text{op}}$ to $\mathcal{CAU}$.
\item {\bf Step 3:} We define a natural transformation $\tau: Id_{\mathcal{CAU}} \to G\circ F$, which we must prove to be an isomorphism.
\item {\bf Step 4:} We define a natural transformation $\mu: Id_{\mathcal{HC}^{\text{op}}} \to F\circ G$, which we must prove to be an isomorphism.
\end{itemize}

To solve Step 1, we consider the functor $F: \mathcal{CAU} \to \mathcal{HC}^{\text{op}}$ defined on objects by:

\begin{eqnarray*}
F: \text{Ob}(\mathcal{CAU}) &\to& \text{Ob}(\mathcal{HC}^{\text{op}}) \\
\mathcal{A} &\mapsto& F(\mathcal{A}):= \widehat{\mathcal{A}}
\end{eqnarray*}

and on morphisms by:

\begin{align*}
F: \text{Mor}_{\mathcal{CAU}}(\mathcal{A},\mathcal{B}) & \to \text{Mor}_{\mathcal{HC}^{op}}
(\widehat{\mathcal{A}}, \widehat{\mathcal{B}}) \\
\phi &\mapsto\!
\begin{aligned}[t]
F(\phi): \widehat{\mathcal{B}} & \to \widehat{\mathcal{A}} \\
\psi & \mapsto \psi \circ \phi
\end{aligned}
\end{align*}

For the second step, consider $G: \mathcal{HC}^{\text{op}} \to \mathcal{CAU}$ defined on objects by:

\begin{eqnarray*}
G: \text{Ob}(\mathcal{HC}^{\text{op}}) &\to&\text{Ob}(\mathcal{CAU}) \\
X &\mapsto& G(X):= C(X)
\end{eqnarray*}

and on morphisms by:

\begin{align*}
G: \text{Mor}_{\mathcal{HC}^{\text{op}}}(X, Y) & \to  \text{Mor}_{\mathcal{CAU}}(C(X), C(Y)) \\
f &\mapsto\!
\begin{aligned}[t]
G(f): C(X) & \to C(Y) \\
g & \mapsto g \circ f
\end{aligned}
\end{align*}

It can be easily verified that both $F$ and $G$ satisfy the conditions in Definition \ref{def-functor}, thus they are covariant functors between the categories $\mathcal{HC}$ and $\mathcal{CAU}$.

For the third step, we define $\tau: \text{Id}_{\mathcal{CAU}} \to G \circ F$, supported by the Gelfand transform, as the collection of morphisms for each commutative unital $C^{\ast}-$algebra $\mathcal{A}$:

\begin{align*}
\tau(\mathcal{A}): \mathcal{A} & \to C(\widehat{\mathcal{A}}) \\
a &\mapsto\!
\begin{aligned}[t]
\tau(\mathcal{A})(a) := \widehat{a} : \widehat{\mathcal{A}} & \to \mathbb{C}\\
\phi & \mapsto \widehat{a}(\phi) = \phi(a)
\end{aligned}.
\end{align*}

The Gelfand-Naimark Theorem guarantees that $\tau(\mathcal{A})$ is an isometric $\ast$-isomorphism for every commutative unital $C^{\ast}-$algebra $\mathcal{A}$. Thus, to conclude that $\tau$ is an isomorphism between the functors, we only need to prove that it is a natural transformation, i.e., for every $\phi: \mathcal{A} \to \mathcal{B}$ $\ast$-homomorphism that preserves the unit, the following diagram commutes:

\begin{equation*}
\xymatrix{
\mathcal{A}\ar[r]^{\tau(\mathcal{A})}\ar[d]_{\phi}& C(\widehat{\mathcal{A}})\ar[d]^{G(F(\phi))} \\
\mathcal{B}\ar[r]_{\tau(\mathcal{B})}& C(\widehat{\mathcal{B}}) 
}
\end{equation*}

Indeed, for each $a \in \mathcal{A}$, we need to show that the functions
$G(F(\phi))(\widehat{a}) = \tau(\mathcal{B})(\phi(a))$, i.e., for each $\psi \in \widehat{\mathcal{B}}$, $G(F(\phi))(\widehat{a})(\psi) = \tau(\mathcal{B})(\phi(a))(\psi)$. This follows from the following sequence of equalities:

\begin{eqnarray*}
G(F(\phi))(\widehat{a})(\psi) &=& \widehat{a}[(F(\phi))(\psi)] = \widehat{a}(\psi \circ \phi)\\ 
&=& \psi(\phi(a)) = \widehat{\phi(a)}(\psi)\\ 
&=& \tau(\mathcal{B})(\phi(a))(\psi)
\end{eqnarray*}

Finally, for the fourth step, we define $\mu: \text{Id}_{\mathcal{HC}} \to F \circ G$ as the collection of morphisms for each Hausdorff compact space $X$:
\begin{align*}
\mu(X): X & \to \widehat{C(X)} \\
x &\mapsto\!
\begin{aligned}[t]
\mu(X)(x) = e_x : C(X) & \to \mathbb{C} \\
f & \mapsto e_x(f) = f(x)
\end{aligned}.
\end{align*}

Theorem \ref{homeomorphism_gn_spaces} guarantees that $\mu(X)$ is an isomorphism in \textbf{Top}, i.e., a homeomorphism. We only need to prove that $\mu$ is a natural transformation, i.e., the following diagram commutes:

\begin{equation*}
\xymatrix{
X\ar[r]^{\mu(X)}\ar[d]_{f}& \widehat{C(X)}\ar[d]^{F(G(f))} \\
Y\ar[r]_{\mu(Y)}& \widehat{C(Y)} 
}
\end{equation*}

Indeed, for $x \in X$ and each $g \in C(Y)$, we have:

\begin{eqnarray*}
[F(G(f))(e_x)](g) &=& (e_x)[G(f)(g)] = g(f(x)) = e_{f(x)}(g) \\
&=& \mu(Y)(f(x))(g)
\end{eqnarray*}

This completes the proof of the theorem.

\end{proof}

\end{section}

% %%%%%%%%%%%%%%%%%%%%%%%%%%%%%%%%%%%%%%%%%%%%%%%5

\begin{section}{Applications}\label{Sec:Aplicacion}

In this section, we present two applications as a consequence of the interpretation of the Gelfand--Naimark Theorem in categorical terms. The first of them, with a more algebraic flavor, is inspired by the Nullstellensatz or the Hilbert's Nullstellensatz (\cite[Section 3.8, p. 62]{R}), which establishes a categorical equivalence between the category of finitely generated and reduced $\mathbb{C}$-algebras and the category of affine algebraic varieties over $\mathbb{C}$. In general terms, the Nullstellensatz is a result that, when translated into categorical terms, establishes connections between the space and its geometric information, and its $\mathbb{C}$-algebra of regular functions and its algebraic information. In a similar vein, it acts similarly to the Gelfand--Naimark theorem. 

The second application, with a more analytical flavor, will demonstrate the simplicity brought by a global theorem of this kind in determining properties and characterizing elements of commutative $C^{\ast}-$algebras with unit, starting from the knowledge and manipulation of these elements on the $C^{\ast}-$algebra of continuous functions over a Hausdorff-Compact space.

Before stating and presenting our main applications, we need to introduce some notation. For each $C^{\ast}$-algebra $\mathcal{A}$, we define $\text{Max}(\mathcal{A})$ as the set of maximal ideals of $\mathcal{A}$. We endow this set with a topology called the \emph{Zariski topology}, which, as we will see, has analytical significance.

The Zariski topology associates to each ideal $I\subseteq \mathcal{A}$ the set
$$
V(I)=\left\{\mathfrak{m}\in\text{Max}(\mathcal{A})|\,\mathfrak{m}\supseteq I\right\}.
$$

For these sets, we have the following properties:
\begin{eqnarray*}
V((0))=\text{Max}(\mathcal{A});  \quad V(I)\cup V(J)= V(I\cap J);\\ V(\mathcal{A})=\phi;  \quad  \bigcap_{\alpha\in\Lambda} V(I_{\alpha})=V(\sum_{\alpha\in\Lambda} I_{\alpha}).
\end{eqnarray*}

In this way, the complements of each $V(I)$ form a topology on $\text{Max}(\mathcal{A})$ by considering each $V(I)$ as a \emph{closed} set in this topology. Notice that, in particular, each point $\mathfrak{m}\in\text{Max}(\mathcal{A})$ is closed since $V(\mathfrak{m})={\mathfrak{m}}$.

\begin{theorem}
The topological spaces $X$ and $\text{Max}(C(X))$ are homeomorphic. Specifically, the function
\begin{eqnarray*}
\zeta: X &\to& \text{Max}{(C(X))} \\
       x &\mapsto & \ker{e_x}
\end{eqnarray*}
is actually a homeomorphism. In particular, $\text{Max}(C(X))$ and $\widehat{C(X)}$ are homeomorphic, where $\widehat{C(X)}$ is equipped with the weak$^\ast$ topology.
\end{theorem}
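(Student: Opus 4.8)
The plan is to realize $\zeta$ as the composite $\zeta = \beta \circ \alpha$, where $\alpha : X \to \widehat{C(X)}$ is the homeomorphism of Theorem \ref{homeomorphism_gn_spaces} and $\beta : \widehat{C(X)} \to \mathrm{Max}(C(X))$ is the assignment $\phi \mapsto \ker\phi$. Once I show that $\zeta$ is a well-defined continuous closed bijection, it is automatically a homeomorphism (a continuous closed bijection has a continuous inverse, since $(\zeta^{-1})^{-1}(C) = \zeta(C)$ is closed for every closed $C$), and the final claim follows by composing: $\beta = \zeta \circ \alpha^{-1}$ is then a homeomorphism from the weak$^\ast$ topology on $\widehat{C(X)}$ to the Zariski topology on $\mathrm{Max}(C(X))$, sending $\phi = e_x$ to $\ker e_x = \ker\phi$.

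First I would check that $\zeta$ is well defined and bijective. For each $x$, the evaluation $e_x$ is a nonzero $\ast$-homomorphism onto $\mathbb{C}$, so $C(X)/\ker e_x \cong \mathbb{C}$ is a field and $\ker e_x$ is a maximal ideal; thus $\zeta(x) \in \mathrm{Max}(C(X))$. Injectivity is immediate from the Urysohn Lemma exactly as in the proof of Theorem \ref{homeomorphism_gn_spaces}: if $x \neq y$, a continuous function vanishing at $x$ but not at $y$ lies in $\ker e_x \setminus \ker e_y$. For surjectivity I would repeat the maximal-ideal argument from the proof of Theorem \ref{theorem_espectros_relacionados}: given $\mathfrak{m} \in \mathrm{Max}(C(X))$, the quotient $C(X)/\mathfrak{m}$ is a commutative unital $C^\ast$-algebra in which every nonzero class is invertible, so Gelfand--Mazur (Theorem \ref{GMazur}) identifies it with $\mathbb{C}$ and the quotient map is an element of $\widehat{C(X)}$ with kernel $\mathfrak{m}$; by the surjectivity of $\alpha$ this functional equals some $e_x$, whence $\mathfrak{m} = \ker e_x = \zeta(x)$.

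The topological part splits into continuity and closedness. For continuity it suffices to examine preimages of basic Zariski-closed sets: for any ideal $I$ one computes $\zeta^{-1}(V(I)) = \{x \in X : \ker e_x \supseteq I\} = \bigcap_{f \in I} f^{-1}(0)$, an intersection of closed subsets of $X$, hence closed. For closedness, given a closed $C \subseteq X$ I would set $I(C) = \{f \in C(X) : f|_C \equiv 0\}$ and show $\zeta(C) = V(I(C))$. The inclusion $\zeta(C) \subseteq V(I(C))$ is immediate, since $f \in I(C)$ forces $f(x) = 0$ for $x \in C$; for the reverse, if $\mathfrak{m} = \ker e_y \supseteq I(C)$ with $y \notin C$, Urysohn produces $g \in I(C)$ with $g(y) = 1$, contradicting $g \in \ker e_y$, so $y \in C$. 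Thus $\zeta$ carries closed sets to Zariski-closed sets.

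The main obstacle is the surjectivity step, namely proving that every maximal ideal of $C(X)$ is a point-evaluation kernel; this is where the machinery developed earlier (Gelfand--Mazur together with the surjectivity half of Theorem \ref{homeomorphism_gn_spaces}) does the real work, and it is also what makes the identification $\zeta(C) = V(I(C))$ go through. Everything else is a routine transcription of the separation argument already used for $\alpha$.
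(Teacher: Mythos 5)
Your proposal is correct, and its core coincides step by step with the paper's own proof: well-definedness via $C(X)/\ker e_x\cong\mathbb{C}$, injectivity via Urysohn, surjectivity by applying Gelfand--Mazur (Theorem \ref{GMazur}) to the quotient by a maximal ideal and then identifying the resulting functional with some $e_x$ through Theorem \ref{homeomorphism_gn_spaces}, and continuity via the computation $\zeta^{-1}(V(I))=\bigcap_{f\in I}f^{-1}(\{0\})$. Where you genuinely diverge is the finishing step, and your version is the more careful one. The paper concludes by invoking the standard fact that a continuous bijection from a compact space to a \emph{Hausdorff} space is a homeomorphism; but applying this requires knowing that $\text{Max}(C(X))$ with the Zariski topology is Hausdorff, which the paper never verifies and which is not automatic for Zariski-type topologies (it does hold here, e.g.\ by separating $x\neq y$ with disjoint open sets $U,W$ and considering the ideals of functions vanishing off $U$, resp.\ off $W$, but this needs its own argument; deducing it from the homeomorphism itself would be circular). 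Your route sidesteps the issue entirely: by proving $\zeta(C)=V(I(C))$ for each closed $C\subseteq X$ --- the nontrivial inclusion again by Urysohn, after using surjectivity to write any $\mathfrak{m}\supseteq I(C)$ as $\ker e_y$ --- you show $\zeta$ is a closed map, and a continuous closed bijection is a homeomorphism with no separation hypothesis on the target. So your argument is self-contained and in fact repairs a small imprecision in the paper. One point you gloss over: to apply Gelfand--Mazur to $C(X)/\mathfrak{m}$ you need $\mathfrak{m}$ to be closed, so that the quotient is a $C^{\ast}$-algebra; as the paper notes in Section \ref{EspectroCalg}, maximal ideals are automatically closed because the closure of a proper ideal is again a proper ideal, so you should cite or reproduce that remark.
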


\begin{proof}
First, let's observe that $\zeta$ is well-defined. In fact, since $C(X)$ is an algebra that separates points, $e_x$ is a nonzero surjective $\ast-$homomorphism, and therefore $\ker(e_x)$ is a maximal ideal, as the quotient $C(X)/\ker(e_x) \simeq \mathbb{C}$ is a field. Now let's show that $\zeta$ is continuous. For every ideal $I \subseteq C(X)$, we have
\begin{eqnarray*}
\zeta^{-1}(V(I)) &=& \{x \in X: \zeta(x) \in V(I) \} \\
&=& \{x \in X: I \subseteq \ker(e_x) \} \\
&=& \{x \in X: f(x) = 0 \text{ for all } f \in I \}\\
&=&\bigcap_{f\in I}f^{-1}(\{0\})
\end{eqnarray*}

where the last expression is a closed set in $X$ as it is an arbitrary intersection of closed sets. Clearly, $\zeta$ is injective, since if $x \neq y$, by Urysohn's Lemma (\cite[Ch. 4, Theorem 33.1]{M}), there exists $f \in C(X)$ such that $f(x) = 0$ and $f(y) \neq 0$, i.e., $\ker(e_x) \neq \ker(e_y)$.

For surjectivity, let $\eta \in \text{Max}(C(X))$ be a maximal ideal, and let's show that there exists $x \in X$ such that $\eta = \ker(e_x)$. In other words, we need to prove that every maximal ideal arises as the kernel of an evaluation $\ast-$homomorphism. Indeed, since $\eta$ is maximal, it is a closed ideal, and therefore $C(X)/\eta$ is a commutative unital $C^{\ast}-$algebra. Moreover, the maximality of $\eta$ implies that $C(X)/\eta$ is a division algebra (in fact, a field), and hence $C(X)/\eta \cong \mathbb{C}$ by the Gelfand-Mazur theorem (\ref{GMazur}). Thus, the projection $\ast-$homomorphism (which is surjective and preserves the unit)
\begin{eqnarray*}
\phi: C(X) & \rightarrow & C(X)/\eta \cong \mathbb{C} \\
f &\mapsto& \phi(f) = \hat{f} = f + \eta
\end{eqnarray*}
satisfies $\eta = \ker(\phi)$, and by Theorem \ref{homeomorphism_gn_spaces}, there exists $x \in X$ such that $\phi = e_x$, i.e., $\eta = \ker(e_x)$.

In conclusion, $\zeta$ is a bijective function that is continuous on a Hausdorff-compact space, and therefore it is a homeomorphism.

The particular case follows from Theorem \ref{homeomorphism_gn_spaces}, which tells us that the function
\begin{eqnarray*}
\widehat{C(X)} &\to& \text{Max}{(C(X))} \\
\phi &\mapsto& \ker{\phi}.
\end{eqnarray*}
is a homeomorphism.
\end{proof}

\begin{theorem}
Let $X$ be a compact Hausdorff space, and let $C(X)$ be its corresponding $C^{\ast}-$algebra of continuous functions on $X$. There is a bijective correspondence
\begin{eqnarray*}
\{Y\subseteq X\mid Y\, \text{is closed}\} \longleftrightarrow \{\text{closed ideals of}\,\, C(X) \}
\end{eqnarray*}
\end{theorem}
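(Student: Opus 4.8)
The plan is to exhibit explicit mutually inverse maps between the two collections and to verify that the two composites are the identity. Given a closed set $Y \subseteq X$, I associate the ideal
\[
I(Y) = \{ f \in C(X) : f(y) = 0 \text{ for all } y \in Y \},
\]
and given a closed ideal $J \subseteq C(X)$, I associate its \emph{hull}
\[
V(J) = \{ x \in X : f(x) = 0 \text{ for all } f \in J \} = \bigcap_{f \in J} f^{-1}(\{0\}).
\]
First I would check that these assignments land in the right collections: $I(Y)$ is plainly a self-adjoint subspace stable under multiplication by $C(X)$, and it is closed because a uniform limit of functions vanishing on $Y$ again vanishes on $Y$; dually, $V(J)$ is closed as an intersection of closed sets.

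The equality $V(I(Y)) = Y$ is the easy direction. The inclusion $Y \subseteq V(I(Y))$ is immediate from the definitions. For the reverse inclusion I would argue contrapositively: if $x \notin Y$, then since $X$ is compact Hausdorff, hence normal, and $Y$ is closed, Urysohn's Lemma (\cite[Theorem 33.1]{M}) supplies $f \in C(X)$ with $f|_Y \equiv 0$ and $f(x) = 1$; this $f$ lies in $I(Y)$ yet does not vanish at $x$, so $x \notin V(I(Y))$.

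The substantive direction is $I(V(J)) = J$ for a closed ideal $J$. The inclusion $J \subseteq I(V(J))$ holds by the very definition of $V(J)$, so the whole weight falls on $I(V(J)) \subseteq J$, and this is the step I expect to be the main obstacle. Fix $g \in I(V(J))$ and $\epsilon > 0$, and set $K = \{ x \in X : |g(x)| \geq \epsilon \}$; this set is compact and, since $g$ vanishes on $V(J)$, disjoint from $V(J)$. For each $x \in K$ there is, by definition of $V(J)$, some $f \in J$ with $f(x) \neq 0$; passing to $f^*f \in J$ (legitimate precisely because ideals here are self-adjoint) and using compactness of $K$, I would extract finitely many such functions and form $h = \sum_i f_i^* f_i \in J$, which is nonnegative on $X$ and bounded below by some $c > 0$ on the compact set $K$. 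The decisive trick is then to approximate $g$ from \emph{within} $J$ by the functions $g_n = g \cdot \tfrac{n h}{1 + n h}$, which lie in $J$ because $\tfrac{n g}{1 + n h} \in C(X)$ and $h \in J$. Estimating $g - g_n = \tfrac{g}{1 + n h}$ separately on $K$ (where $\tfrac{1}{1+nh} \leq \tfrac{1}{1+nc}$) and off $K$ (where $|g| < \epsilon$) yields $\|g - g_n\|_\infty \leq \max\{\|g\|_\infty/(1+nc),\, \epsilon\}$, which is at most $\epsilon$ for $n$ large. As $\epsilon$ was arbitrary, $g$ lies in the closure of $J$, and since $J$ is closed we conclude $g \in J$.

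Finally I would record that both composites equal the identity, so the two maps are mutually inverse bijections, which establishes the stated correspondence. The heart of the argument is the approximation step proving $I(V(J)) \subseteq J$: it is exactly there that self-adjointness of ideals (to build the nonnegative $h$), compactness of $X$ (to produce the finite sum and the uniform lower bound $c$), and closedness of $J$ (to pass from approximation to membership) all come together, and this is the step I expect to demand the most care.
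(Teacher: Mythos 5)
Your proof is correct, and it takes a genuinely different route from the paper's. The paper deduces the correspondence from the categorical machinery: for a closed ideal $I$ it forms the quotient $C(X)/I$, invokes Theorem \ref{GGN} to identify it with some $C(Y)$, and dualizes the surjection $C(X)\to C(Y)$ to a continuous injection $Y\to X$ with closed image; conversely a closed $Y\subseteq X$ yields the restriction map $i^{\ast}:C(X)\to C(Y)$ with kernel $I_Y$. You instead work entirely at the level of points and functions: you define the hull--kernel maps $Y\mapsto I(Y)$ and $J\mapsto V(J)$ and verify that \emph{both} composites are the identity, with the hard inclusion $I(V(J))\subseteq J$ handled by the classical approximation $g_n=g\cdot\tfrac{nh}{1+nh}$, where $h\in J$ is nonnegative and bounded below on $\{|g|\geq\epsilon\}$. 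What the paper's route buys is exactly what the section is for: a showcase of the categorical theorem, with the whole argument reduced to formal duality plus the quotient construction. What your route buys is completeness: the paper never actually checks that its two assignments are mutually inverse (that the closed set attached to $I$ recovers $I$ itself), and it asserts without proof that the dualized map $Y\to X$ is injective with closed image; your Urysohn step $V(I(Y))=Y$ and your approximation step $I(V(J))=J$ are precisely the verifications that close those gaps, at the cost of not using Gelfand--Naimark at all. One small remark: for $f\in J$ the element $f^{\ast}f$ lies in $J$ by the ideal property alone (it is an algebra element times an element of $J$), so self-adjointness of $J$ is not actually needed there, though the paper builds it into its definition of ideal in any case.
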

\begin{proof}
For each closed set $Y\subseteq X$ (which is compact), the inclusion map $i: Y \to X$ is continuous and injective, and it induces, as a consequence of the Gelfand-Naimark Theorem \ref{GGN}, a surjective $\ast-$homomorphism $i^{\ast}: C(X) \rightarrow C(Y)$ between the $C^{\ast}-$algebras. The kernel of this map is
\begin{equation*}
\ker{i^*} := I_Y =\{f\in C(X): f(y) = 0,\,\forall y\in Y\}.
\end{equation*}
Let's show that $I_Y$ is closed in the weak$^*$ topology. Indeed, for $\{f_n\}_{n\in\mathbb{N}}\subset I_Y$ a sequence of continuous functions that converges to $f\in C(X)$, we have that for every $\epsilon>0$ and each $y \in Y$, $| f(y) | = | f_n(y) - f(y) | < \epsilon$, which implies that $f \in I_Y$.

Thus, the natural algebraic isomorphism between $C(X)/I_Y$ and $C(Y)$ induced by $i^{\ast}$ (see Proposition \ref{prop_quot}) is actually a $\ast-$algebra isomorphism.

Conversely, every closed ideal $I$ of $C(X)$ defines a surjective projection $\ast-$homomorphism between the $C^{\ast}-$algebras
\begin{equation*}
\phi: C(X) \hookrightarrow C(X)/I
\end{equation*}
such that $\ker{\phi}=I$. Now, by virtue of the Gelfand-Naimark Theorem \ref{GGN}, for the $C^{\ast}-$algebra $C(X)/I$, there exists a compact set $Y$ such that $\alpha: C(Y)\stackrel{\simeq}{\rightarrow} C(X)/I$ is a $\ast-$isomorphism. Thus, the surjective $\ast-$homomorphism $\alpha^{-1}\circ\phi$ corresponds, by the same theorem, to a continuous and injective function $f:Y\rightarrow X$ whose image $f(Y)$ is homeomorphic to $Y$ and closed in $X$, i.e., compact. Identifying $Y$ with its image, we conclude that the ideal $I$ corresponds to the compact subset $Y$ of $X$, as desired.
\end{proof}

\begin{theorem}\label{radio_norma}
Let $\mathcal{A}$ be a commutative $C^{\ast}-$algebra with unity. For every $a\in\mathcal{A}$, we have $r(a)=\|a\|$.
\end{theorem}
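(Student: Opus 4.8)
The plan is to read the equality off directly from the two facts already established about the Gelfand transform $\kappa\colon\mathcal{A}\to C(\widehat{\mathcal{A}})$, so that no new analytic work is required. First I would recall Lemma \ref{transformada_norma}, which asserts that $\|\kappa(a)\|_\infty = r(a)$ for every $a\in\mathcal{A}$; this identity is the crux, since it encodes the fact that the supremum norm of the transformed element equals the spectral radius, by way of Theorem \ref{theorem_espectros_relacionados}. The remaining ingredient is Theorem \ref{gelfand_theorem} (Gelfand), which guarantees that $\kappa$ is an \emph{isometric} $\ast$-isomorphism, whence $\|\kappa(a)\|_\infty = \|a\|$. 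Chaining these two equalities yields $r(a) = \|\kappa(a)\|_\infty = \|a\|$, which is the claim.

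A slightly more conceptual route, in keeping with the theme of the paper, is to invoke the categorical statement Theorem \ref{GGN} (equivalently, the ``in particular'' clause of Theorem \ref{gelfand_theorem}): there is a compact Hausdorff space $X=\widehat{\mathcal{A}}$ and an isometric $\ast$-isomorphism $\mathcal{A}\cong C(X)$ carrying $a$ to $\widehat{a}$. Since the spectrum $\sigma(\cdot)$ is defined purely through invertibility, any $\ast$-isomorphism preserves spectra and hence spectral radii, so $r(a)=r(\widehat{a})$. One then appeals to Example \ref{rad_norma_C}, where it is computed that $r(f)=\|f\|_\infty$ for every $f\in C(X)$, to obtain $r(\widehat{a})=\|\widehat{a}\|_\infty$; combined with the isometry $\|\widehat{a}\|_\infty=\|a\|$ this again gives $r(a)=\|a\|$.

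I do not expect a genuine obstacle here: all of the difficulty has already been absorbed into the prior results---most notably the spectral radius formula (Theorem \ref{radius_formula_theorem}), the non-vanishing of the spectrum (Lemma \ref{espectronovacio}) used through Gelfand--Mazur, and the Stone--Weierstrass surjectivity argument that makes $\kappa$ an isomorphism. The only point deserving a word of care is to make explicit that the spectral radius is an invariant of the $\ast$-isomorphism class, i.e.\ that it depends only on the algebraic (spectral) structure and not on the chosen norm; this is immediate from Definition \ref{def_Resolvente_Espectro}, once one observes that a $\ast$-isomorphism carries invertible elements to invertible elements bijectively, so that $\sigma(a)=\sigma(\kappa(a))$ and therefore $r(a)=r(\kappa(a))$.
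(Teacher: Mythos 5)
Your proposal is correct, and your second route is essentially the paper's own proof: the paper invokes the categorical Gelfand--Naimark Theorem \ref{GGN} to reduce the claim to $f\in C(X)$ and then cites Example \ref{rad_norma_C}. You actually improve on the paper's terse wording by making explicit the point it glosses over, namely \emph{why} the reduction is legitimate: the $\ast$-isomorphism preserves invertibility and hence spectra (so $r(a)=r(\widehat{a})$), and it is isometric (so $\|\widehat{a}\|_\infty=\|a\|$); both transfers are needed for ``reduces to'' to mean anything. Your first route is a slight streamlining that the paper does not take: chaining Lemma \ref{transformada_norma} ($r(a)=\|\kappa(a)\|_\infty$, which already lives at the level of the original element $a$ via Theorem \ref{theorem_espectros_relacionados}) with the isometry clause of Theorem \ref{gelfand_theorem} gives $r(a)=\|\kappa(a)\|_\infty=\|a\|$ in one line, with no need to discuss spectrum invariance or pass through $C(X)$ at all. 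The two arguments rest on the same machinery, but your first version is the more economical one, and it makes clear that the categorical packaging, while thematically apt, is not logically necessary for this particular corollary.
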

\begin{proof}
By the categorical Gelfand-Naimark Theorem (\ref{GGN}), the theorem reduces to proving the equality for $f\in C(X)$ with $X$ a compact Hausdorff space. In this latter case, by example (\ref{rad_norma_C}), we conclude that $r(f)=|f |_{\infty}$.
\end{proof}

Next, we present an application where the categorical theorem allows us to go back and forth with information about a specific property in the $C^{\ast}-$algebra $\mathcal{A}$ to the set of functions $C(\widehat{\mathcal{A}})$. In this specific case, the property of interest is invertibility, i.e., an element $a$ is invertible in $\mathcal{A}$ if and only if $\hat{a}$ is invertible in $C(\widehat{\mathcal{A}})$ and vice versa.

\begin{definition}
Given a $C^{\ast}-$algebra $\mathcal{A}$, an element $a\in \mathcal{A}$ is called:
\begin{enumerate}
\item \emph{Self-adjoint} if $a = a^*$.
\item \emph{Unitary} if $aa^*=a^*a=e$.
\item \emph{Projection} if $a^*=a$ and $a^2=a$.
\item \emph{Positive} if $a=bb^*$ for some $b \in \mathcal{A}$.
\end{enumerate}
\end{definition}

   \begin{theorem}
Let $a \in \mathcal{A}$.
\begin{enumerate}
\item If $a$ is self-adjoint, then $\sigma(a)\subset \mathbb{R} $.
\item If $a$ is unitary, then $\sigma(a)\subset S^1$.
\item If $a$ is a projection, then $\sigma(a)\subset \{0,1\}$.
\item If $a$ is positive, then $\sigma(a)\subset \mathbb{R}^+\cup {0}$.
\end{enumerate}
\end{theorem}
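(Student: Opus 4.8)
The plan is to transport every statement from the abstract algebra $\mathcal{A}$ to the concrete function algebra $C(\widehat{\mathcal{A}})$ via the Gelfand transform, where each spectral condition becomes an elementary pointwise statement about a continuous complex-valued function. By Theorem \ref{gelfand_theorem} (equivalently, the categorical Theorem \ref{GGN}), the Gelfand transform $\kappa : \mathcal{A} \to C(\widehat{\mathcal{A}})$ is a unital isometric $\ast$-isomorphism. First I would observe that such an isomorphism preserves spectra: since $\kappa(\lambda e - a) = \lambda\cdot 1 - \hat{a}$ and $\kappa$ carries invertible elements to invertible elements in both directions, we have $\sigma(a) = \sigma(\hat{a})$. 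Combining this with the earlier example computing the spectrum in $C(X)$, we obtain the key identity $\sigma(a) = \sigma(\hat{a}) = \text{Im}(\hat{a})$. Thus each of the four parts reduces to determining the range of the continuous function $\hat{a}$.

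Next I would translate each algebraic hypothesis into a pointwise condition on $\hat{a}$, using that $\kappa$ is a $\ast$-homomorphism, so that $\widehat{a^*} = \overline{\hat{a}}$ and sums and products are preserved. For part (1), self-adjointness $a = a^*$ gives $\hat{a} = \overline{\hat{a}}$, so $\hat{a}$ is real-valued and $\text{Im}(\hat{a}) \subset \mathbb{R}$. For part (2), unitarity $a^*a = e$ yields $\overline{\hat{a}}\,\hat{a} = |\hat{a}|^2 \equiv 1$, so $\hat{a}$ takes values on the unit circle and $\text{Im}(\hat{a}) \subset S^1$. For part (3), the projection conditions $a^* = a$ and $a^2 = a$ give a real-valued $\hat{a}$ satisfying $\hat{a}^2 = \hat{a}$ pointwise, which forces $\hat{a}(\phi) \in \{0,1\}$ for every $\phi$, and hence $\text{Im}(\hat{a}) \subset \{0,1\}$. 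For part (4), positivity $a = bb^*$ gives $\hat{a} = \hat{b}\,\overline{\hat{b}} = |\hat{b}|^2 \geq 0$, so $\text{Im}(\hat{a}) \subset \mathbb{R}^+ \cup \{0\}$.

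I do not anticipate a serious obstacle, since the content of the theorem is precisely that the Gelfand transform converts abstract $\ast$-algebraic relations into the visible pointwise behaviour of functions. The only point requiring a little care is the justification that the $\ast$-isomorphism $\kappa$ preserves the spectrum, which I would state explicitly rather than take for granted, as it is exactly what licenses replacing $\sigma(a)$ by $\text{Im}(\hat{a})$. Once that identification is in place, the four cases become immediate pointwise computations, illustrating precisely the simplification promised in the introduction to this section.
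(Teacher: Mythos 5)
Your proposal is correct and follows essentially the same route as the paper: identify $\sigma(a)=\sigma(\hat a)=\text{Im}(\hat a)$ via the Gelfand transform and then verify each of the four conditions pointwise on $\hat a$. The only difference is that you explicitly justify that the $\ast$-isomorphism preserves spectra, a step the paper dismisses with ``it is clear that,'' so your version is if anything slightly more complete.
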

    
\begin{proof}
It is clear that $\sigma(a)=\sigma(\hat{a})=\text{Im}(\hat{a})$.
\begin{enumerate}
\item A function $f$ is self-adjoint if $\overline{f(z)}=f(z)$ for every element in the domain. Since the spectrum of $f$ is its image, we have $\overline{\sigma(f)}=\sigma(f)$ and therefore $\sigma(f)\subset \mathbb{R}$. In particular, for the function $\hat{a}$, we have $\sigma(a)=\sigma(\hat{a})\subset \mathbb{R}$.

\item A function $f$ is unitary if $\overline{f(z)}f(z)=1$ for every element in the domain. Since the spectrum of $f$ is its image, the elements of the spectrum have norm 1 and therefore $\sigma(f)\subset S^1$. In particular, for the function $\hat{a}$, we have $\sigma(a)=\sigma(\hat{a})\subset S^1$.

\item A function $f$ is a projection if it is self-adjoint, and therefore its spectrum is real; and furthermore, $f^2(z)=f(z)$ for every element in the domain. This implies that $f(z)=1$ or $f(z)=0$ for every element $z$ in the domain. Since the spectrum of $f$ is its image, we have $\sigma(f)\subset \{0,1\}$. In particular, for the function $\hat{a}$, we have $\sigma(a)=\sigma(\hat{a})\subset \{0,1\}$.
	
	\item A function $f$ is positive if $f(z)$ is real and $f(z)\geq 0$ for every element in the domain. Since the spectrum of $f$ is its image, we have $\sigma(f)\subset \mathbb{R}^+\cup \{0\}$. In particular, for the function $\hat{a}$, we have $\sigma(a)=\sigma(\hat{a})\subset \mathbb{R}^+\cup \{0\}$.

\end{enumerate}
\end{proof}
\end{section}

% Bibliografía
\bibliographystyle{plain}  % Estilo de la bibliografía (puedes cambiarlo)
\bibliography{bibliogra}  % Nombre del archivo .bib sin la extensión

\end{document}